\documentclass{amsart}
\usepackage{amssymb,amsmath,amsthm,amsfonts,epsfig,graphicx}
\usepackage[utf8]{inputenc}
\usepackage[T1]{fontenc}
\usepackage[english]{babel}
\usepackage{hyperref}

\hypersetup{
    colorlinks,
    citecolor=black,
    filecolor=black,
    linkcolor=black,
    urlcolor=black
}

\theoremstyle{plain}
\newtheorem{teo}{Theorem}[section]
\newtheorem{cor}[teo]{Corollary}
\newtheorem{lem}[teo]{Lemma}
\newtheorem{prop}[teo]{Proposition}
\theoremstyle{definition}
\newtheorem{rmk}[teo]{Remark}
\newtheorem{df}[teo]{Definition}
\newtheorem{exa}[teo]{Example}

\DeclareMathOperator{\ann}{Ann}
\DeclareMathOperator{\sign}{Sign}

\DeclareMathOperator{\spec}{spec}
\DeclareMathOperator{\specm}{specm}

\DeclareMathOperator{\diam}{diam}
\DeclareMathOperator{\clos}{clos}

\DeclareMathOperator{\dist}{dist}

\DeclareMathOperator{\card}{card}
\newcommand{\rad}[1]{\sqrt{#1}}

\newcommand{\U}{{\mathcal U}}
\newcommand{\A}{{\mathcal A}}
\newcommand{\B}{{\mathcal B}}

\newcommand{\V}{{\mathcal V}}
\newcommand{\W}{{\mathcal W}}
\newcommand{\m}{\mathfrak{m}}

\newcommand{\bigtimes}{\times}
\renewcommand{\ll}{\prec}

\newcommand{\I}{\mathcal I}
\newcommand{\J}{\mathcal J}
\newcommand{\OO}{\mathcal O}

\newcommand{\R}{\mathbb R}
\newcommand{\Q}{\mathbb Q}
\newcommand{\K}{\mathcal K}

\newcommand{\Z}{\mathbb Z}
\newcommand{\N}{\mathbb N}
\renewcommand{\epsilon}{\varepsilon}

\title{Expansivity on commutative rings}
\date{\today}
\author[A.~Artigue]{Alfonso Artigue}
\address{Departamento de Matemática y Estadística del Litoral, Universidad de la Rep\'ublica, Salto, Uruguay.}
\email{artigue@unorte.edu.uy}

\author[M.~Haim]{Mariana Haim}
\address{Centro de Matem\'atica\\
Facultad de Ciencias, Universidad de la Rep\'ublica\\
Montevideo, Uruguay.}
\email{negra@cmat.edu.uy}

% \author{Alfonso Artigue and Mariana Haim}
\begin{document}

\begin{abstract}
In this article we extend the notion of expansivity from topological dynamics to automorphisms of commutative rings with identity. 
We show that
a ring admits a 0-expansive automorphism if and only if it is a finite product of local rings. 
Generalizing a well known result of compact metric spaces,
we prove that if a ring admits a positively expansive automorphism then it admits finitely many maximal ideals. 
We prove its converse for principal ideal domains. 
We also consider the topological expansivity induced, in the spectrum of the ring with the Zariski topology, 
by an automorphism and some consequences are derived.
\end{abstract}
\maketitle

\section{Introduction}

Given a compact metric space $(X,\dist)$ we say that 
a homeomorphism $h\colon X\to X$ is \emph{expansive} if there is 
$\delta>0$ such that if $x,y\in X$, $x\neq y$, then $\dist(h^n(x),h^n(y))>\delta$ 
for some $n\in \Z$. 
In \cite{AH} the reader can find several results on expansive homeomorphisms 
that show the important role played by expansivity in dynamical systems.
Since \cite{AdKoMc,KR} it is known that expansivity can be expressed independently of the metric. 
In \cite{KR} it is shown that expansivity is equivalent to the existence of a \emph{topological generator}. 
In \cite{AAM} this notion is generalized to topological spaces and 
examples on non-Hausdorff spaces are given. 
The key is to consider the action of $h$ on the open covers of $X$.
The purpose of this article is to extend the notion of expansivity to 
an algebraic context. 

%\rojo{Our motivation is that we usually find in mathematics that properties of a certain object are understood by considering  representations of this object. This gives a new point of view, and at the same time leads to a new object of study. Examples are the study of representations of groups, leading to the theory of monoidal categories, and the study of linear operators  on Hilbert spaces, leading to the theory of $C^*$-algebras.}

%In our context, 
If $X$ is a metric space, we consider
the commutative unital ring $C(X)$ of continuous functions $f\colon X\to\R$. 
A homeomorphism $h$ of $X$ induces an automorphism of $C(X)$.
Open subsets of $X$ are naturally associated to ideals of $C(X)$ and 
open covers of $X$ give rise to algebraic generators of $C(X)$. 
In this way, the dynamical notions that can be expressed in terms of open covers, 
can be translated to automorphisms of rings. 
The rings that we consider are unital and commutative.

Our first result comes from the following topological fact. 
If $X$ is a finite set then there is $\delta>0$ such that $\dist(h^n(x),h^n(y))>\delta$ for all $x\neq y$ and all $n\in\Z$, in particular for $n=0$. From a dynamical point of view this example is trivial, the points are already separated at instant $n=0$. 
This property of the topological space has nothing to do with the dynamics, but sometimes we refer to it as $0$-expansivity of each homeomorphism.
As a particular case, on a finite set, even the identity is expansive. 
There is the notion of $0$-expansivity in the algebraic framework, and it depends on the ring and not on the automorphism. In Theorem \ref{teoChar0Exp}, we prove that $0$-expansivity 
% of any automorphism at instant $0$ 
holds 
precisely for finite products of local rings. However, there are some differences in this new context. In particular, 
we give an example of a ring such that the identity is expansive but not $0$-expansive (see Remark \ref{elejemplo}).

Our second result is related to positive expansivity, i.e., the separation occurs at some $n\geq 0$. 
It is known that if a compact metric space admits a positively expansive homeomorphism then it is finite, see for example \cite{CK}. 
 In Theorem \ref{teoPosExpFinMax} we prove that if $R$ admits a positively expansive automorphism then $R$ has 
finitely many maximal ideals. Its converse is proved for principal ideal domains in Theorem \ref{detopolandia}.

There is a classical functor from the category of commutative rings to the category of topological spaces that 
associates to a ring its prime spectrum (the set of all primes ideals of the ring) endowed with the so called Zariski topology.
In \S\ref{secZariski}, we show that algebraic expansivity of a ring automorphism is strictly stronger than topological expansivity of its spectrum.
Naturally, this functor shed light on some proofs during the research. These links are explained in \S\ref{secExtClos} and lead to a counterexample for the converse of
%The topological spaces which are homeomorphic to the spectrum of a commutative rings were characterized in \cite{Hoch} and are called 
%\emph{spectral spaces}. In \S\ref{secSpSp} we give a spectral space for which the identity
%is not expansive, but it admits a positively expansive homeomorphism whose inverse is not. 
%In a ring associated to this spectral space the identity is not expansive, showing that the converse of 
Theorem \ref{teoPosExpFinMax} that we present in \S\ref{secSpSp}. 
Moreover, we show that there is a ring with finite maximal spectrum and such that the identity is not positively expansive.

% \rojo{ decir lo que hacemos-en presente- en secci\'on 4.2-cambie esto de lugar (estaba en la organizacion del articulo, ademas creo que quisiste poner 4.3- por eso mi ultima frase)}

This article is organized as follows. 
In \S\ref{secTopoRing} we introduce the main notions of this paper, 
prove some basic properties and state the adequacy in the topological context of our algebraic approach.
In \S\ref{secStrongExp} we consider some strong forms of expansivity and prove Theorems \ref{teoChar0Exp}, \ref{teoPosExpFinMax} and \ref{detopolandia}. 
Finally, in \S\ref{secZ} we consider topological expansivity on the prime and maximal spectrum of a ring.
% we consider the spectrum of a topological space, that is the space arising from a commutative ring by the so called Zariski topology in the set of its prime ideals. We prove that this construction preserves minimal generators and expansive and positive expansive homeomorphisms. 
We thank Ali Barzanouni 
%	and Leila Sharifan for their 
	for his
	kind and useful comments on a preliminary version of this paper.

\section{Topological and algebraic expansivity}
\label{secTopoRing}
In this section we present the notion of expansivity of 
homeomorphisms and its translation to automorphisms of commutative rings. Under this translation, open covers correspond to generators of rings. 

The following is a general definition that will be used in both contexts (mainly for $\A,\B$ open covers and $\A,\B$ generators of a ring). 
\begin{df}
Let $\A$ and $\B$ be families of sets. We say that $\A$ \emph{refines} $\B$ and write $\A\prec \B$ 
if for any $A\in \A$, there is some $B\in \B$ such that $A\subseteq B$.
\end{df}

\subsection{Topological expansivity} 
Let us start introducing some notation, explaining how to express expansivity without the metric and recalling some known results.
We assume that $X$ is a compact topological space and that the open covers of $X$ are finite.
For each $i=1,\dots,n$ let $\U_i$ be an open cover of $X$. 
Following \cite{AdKoMc,KR}, we consider the open cover
$
\wedge_{i=1}^n \U_i 
$
defined as the family of all intersections of the form $U_1\cap U_2\cap \cdots \cap U_n$, with $U_i\in \U_i$.
If $h\colon X\to X$ is continuous then $h^{-1}(\U)=\{h^{-1}(U)\mid U\in \U\}$ is an open cover of $X$.

\begin{rmk}
\label{rmkOpCovIdemp}
If $\U$ is an open cover of cardinality $k$, we define
$\V=\wedge_{i=1}^k\U$. It is easy to see that $\V\ll\U$ and $\V\wedge \V=\V$. 
That is, every open cover can be refined by an \emph{idempotent} open cover.
\end{rmk}

The key to translate expansivity of metric spaces to the language of open covers is the Lebesgue number. 
We say that $\sigma>0$ is a \emph{Lebesgue number} for an open cover $\U$ if 
$\dist(x,y)<\sigma$ implies that there is $U\in\U$ such that $x,y\in U$. 
Every open cover of a compact metric space has a Lebesgue number (see \cite[Theorem 26, p. 154]{K}). 

\begin{prop}
\label{propEquivExpCub}
If $X$ is a compact metric space and $h\colon X\to X$ is a homeomorphism then the following are equivalent:
\begin{enumerate}
\item $h$ is expansive,
\item there exists an open cover $\U$ such that for any other open cover $\V$ we have
$
\wedge_{|i|\leq N} h^{-i}(\U)\ll \V
$
for some $N\geq 0$.
\end{enumerate}
\end{prop}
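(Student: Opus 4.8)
The plan is to prove the two implications separately, using a Lebesgue number as the bridge between the metric and the open covers. For $(1)\Rightarrow(2)$, I would fix an expansivity constant $\delta>0$ and, by compactness, a finite open cover $\U$ of $X$ whose members all have diameter less than $\delta/2$ (for instance, a finite subcover of the cover of $X$ by balls of radius $\delta/4$); the claim is that this $\U$ witnesses $(2)$. To verify it, given an arbitrary open cover $\V$ I would take a Lebesgue number $\sigma>0$ for $\V$ in the strong form that every subset of $X$ of diameter less than $\sigma$ lies in some member of $\V$ (the usual Lebesgue number lemma, cf. \cite{K}), and argue by contradiction. If $\wedge_{|i|\le N}h^{-i}(\U)\not\prec\V$ for every $N\ge 0$, then for each $N$ there are $U^N_{-N},\dots,U^N_{N}\in\U$ with $W_N:=\bigcap_{|i|\le N}h^{-i}(U^N_i)$ contained in no member of $\V$; hence $W_N\ne\emptyset$ and $\diam(W_N)\ge\sigma$, so I can pick $x_N,y_N\in W_N$ with $\dist(x_N,y_N)>\sigma/2$. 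Since $h^i(x_N),h^i(y_N)\in U^N_i$ for $|i|\le N$, this gives $\dist(h^i(x_N),h^i(y_N))<\delta/2$ whenever $|i|\le N$.

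The remainder of this implication is a compactness argument: passing to a subsequence along which $x_N\to x$ and $y_N\to y$, one gets $\dist(x,y)\ge\sigma/2>0$, so $x\ne y$, while for each fixed $i\in\Z$ the bound $\dist(h^i(x_N),h^i(y_N))<\delta/2$ holds for all large $N$, so by continuity of $h^i$ and of the metric $\dist(h^i(x),h^i(y))\le\delta/2<\delta$ for all $i\in\Z$. This contradicts expansivity of $h$, proving $(2)$.

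For $(2)\Rightarrow(1)$, I would take $\U$ as in $(2)$ and a Lebesgue number $\delta>0$ for $\U$, and show $h$ is expansive with constant $\delta/2$. Suppose $x,y\in X$ satisfy $\dist(h^n(x),h^n(y))\le\delta/2<\delta$ for all $n\in\Z$; it suffices to prove $x=y$. For each $n$ choose $U_n\in\U$ with $h^n(x),h^n(y)\in U_n$, i.e. $x,y\in h^{-n}(U_n)$; then $x,y\in W_N:=\bigcap_{|i|\le N}h^{-i}(U_i)\in\wedge_{|i|\le N}h^{-i}(\U)$ for every $N\ge 0$. If $x\ne y$, then $\V:=\{X\setminus\{x\},\,X\setminus\{y\}\}$ is a finite open cover of $X$ (singletons are closed and $\{x\}\cap\{y\}=\emptyset$), so by $(2)$ there is $N$ with $\wedge_{|i|\le N}h^{-i}(\U)\prec\V$, forcing $W_N$ into $X\setminus\{x\}$ or $X\setminus\{y\}$ and contradicting $x,y\in W_N$. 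Hence $x=y$, and $h$ is expansive.

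The step I expect to be the main obstacle is $(1)\Rightarrow(2)$: one must choose the members of $\U$ with diameter a definite fraction of $\delta$ so that the limiting inequality stays strict, and the Lebesgue number of $\V$ is needed precisely to convert the failure of refinement (that $W_N$ lies in no member of $\V$) into the existence of a pair of points of $W_N$ at distance bounded away from $0$, which is the input for the compactness argument. The converse is essentially an unwinding of the definitions, the only idea there being the choice of the separating cover $\{X\setminus\{x\},X\setminus\{y\}\}$, which forces the refining cover $\wedge_{|i|\le N}h^{-i}(\U)$ to separate $x$ from $y$.
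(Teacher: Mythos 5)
Your proof is correct, and it follows the paper's strategy almost exactly. For $(1)\Rightarrow(2)$ you use the same ingredients as the paper: a finite cover $\U$ by sets whose diameter is a fixed fraction of the expansivity constant, a Lebesgue number $\sigma$ for the given cover $\V$, and a compactness/contradiction argument producing two distinct points whose orbits stay within $\delta$ forever; your use of $\delta/2$ and $\sigma/2$ even handles the strict versus non-strict inequalities somewhat more carefully than the paper's version. In the converse direction the only genuine difference is the choice of the auxiliary cover: the paper refines $\wedge_{|i|\leq N}h^{-i}(\U)$ against a cover by sets of diameter less than $\dist(x,y)$, whereas you use the two-element cover $\{X\setminus\{x\},X\setminus\{y\}\}$; this is slightly more elementary (it needs only that points are closed, not the construction of a small-diameter cover) and yields the same conclusion, namely expansivity with constant $\delta/2$ where $\delta$ is a Lebesgue number for $\U$ (the paper nominally keeps the constant $\delta$ but in fact only obtains separation $\geq\delta$, so the two arguments agree up to this harmless adjustment).
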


\begin{proof}
Given an expansivity constant $\delta$, take an open cover $\U$ such that $\diam(U)<\delta$ for all $U\in\U$. 
Let $\V$ be an open cover and take $\sigma>0$ a Lebesgue number for $\V$.
Arguing by contradiction, suppose that for each $N$ there are open sets 
$U_i\in\U$, $|i|\leq N$, 
such that $\cap_{|i|\leq N}h^{-i}(U_i)$ is not contained in any $V\in\V$. 
Thus, there are $x_N,y_N\in U_0$ such that $\dist(h^i(x_N),h^i(y_N))<\delta$ for all $|i|\leq N$ and 
$\dist(x_N,y_N)\geq\sigma$ (otherwise, there would be $V\in\V$ containing these points). 
As $X$ is compact there are limit points $x_*,y_*$ of subsequences of $x_N,y_N$. 
We conclude that $\dist(x_*,y_*)\geq\sigma$ (i.e., $x_*\neq y_*$) and 
$\dist(h^i(x_*),h^i(y_*))\leq\delta$ for all $i\in\Z$. 
This contradicts that $\delta$ is an expansivity constant.

Conversely, take an open cover $\U$ of $X$ such that for any other open cover $\V$, we have
$\bigcap_{i=-N}^N h^n(\U)\ll \V$ for some $N$.
Let $\delta>0$ be a Lebesgue number for $\U$. 
We will show that $\delta$ is an expansivity constant. 
Take $x,y\in X$ such that $\dist(x,y)=\epsilon\in (0,\delta)$. 
Consider an open cover $\V$ such that $\diam(V)<\epsilon$ for all $V\in\V$.
We know that there is $N$ such that 
$\bigcap_{i=-N}^N h^n(\U)\ll \V$. 
If $\dist(h^i(x),h^i(y))<\delta$ for all $|i|\leq N$, 
then for each $i$ we can take $U_i\in\U$ such that $h^{-i}(x),h^{-i}(y)\in U_i$. 
Then $x,y\in U_*=\bigcap_{i=-N}^N h^i(U_i)\in\bigcap_{i=-N}^N h^n(\U)$, which is a contradiction, since $x,y\in U_*\subset V$ for some $V\in \V$ and $\dist(x,y)=\epsilon>\diam(V)$.
\end{proof}

In light of Proposition \ref{propEquivExpCub},
a homeomorphism $h$ of a topological space is \emph{expansive} if  
there exists an open cover $\U$ such that for any other open cover $\V$, there is some $N\geq 0$ such that 
$
\wedge_{|i|\leq N} h^{-i}(\U)\ll \V.
$
It was called \emph{refinement expansivity} in \cite{AAM}.
We say that $h$ is \emph{positively expansive} if
there exists an open cover $\U$ of $X$ such that for any other open cover $\V$, there is some $N\geq 0$ such that
$
\wedge_{i=0}^N h^{-i}(\U)\ll \V.$\footnote{Note that the two topological notions of expansivity can be defined for general metric spaces. However, in the non compact case, they are not equivalent. Indeed, the existence of a refinement expansive homeomorphism on $X$ implies compactness of $X$ (\cite[Corollary 3.10]{AAM}), while the definition with the metric does not (for example $f(x)=2x$ in $\R$ with the usual distance is expansive).}

In what follows, we list some results that will be considered in the generalization of topological expansivity for
commutative rings.

\begin{prop}\cite[Lemma 3.19]{AAM}
	\label{lemPosExp}
	A homeomorphism $h\colon X\to X$ of a compact topological space is positively expansive if and only if 
	there is an open cover $\U$ such that for every open cover $\V$ there is $n\geq 0$ 
	such that $h^{-n}(\U)\ll\V$.
\end{prop}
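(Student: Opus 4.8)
The plan is to prove both implications. The nontrivial direction is the ``only if'': assuming $h$ is positively expansive in the sense of the definition with $\wedge_{i=0}^N h^{-i}(\U)$, I want to produce a single cover $\U'$ (in fact $\U$ itself will work) such that for each open cover $\V$ there is $n\ge 0$ with $h^{-n}(\U)\prec\V$. The key device is Remark \ref{rmkOpCovIdemp}: given the witnessing cover $\U$ of positive expansivity, replace it by an idempotent refinement. More precisely, if $k=\card(\U)$, set $\U_0=\wedge_{i=1}^k\U$, so that $\U_0\prec\U$ and $\U_0\wedge\U_0=\U_0$. I would first check that $\U_0$ is again a witness for positive expansivity (refining the witnessing cover preserves the property, since $\wedge_{i=0}^N h^{-i}(\U_0)\prec \wedge_{i=0}^N h^{-i}(\U)$). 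Then I claim $\U_0$ also witnesses the apparently weaker ``one-step'' property.

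The crucial step is the following observation about idempotent covers: if $\U_0\wedge\U_0=\U_0$, then for every $m\ge 0$ one has
\[
\textstyle\bigwedge_{i=0}^{m} h^{-i}(\U_0)\ \prec\ h^{-m}(\U_0)\wedge\Big(\bigwedge_{i=0}^{m-1} h^{-i}(\U_0)\Big),
\]
and iterating, together with the identity $h^{-i}(\U_0\wedge\U_0)=h^{-i}(\U_0)\wedge h^{-i}(\U_0)=h^{-i}(\U_0)$, one shows by induction on $m$ that
\[
\textstyle\bigwedge_{i=0}^{m} h^{-i}(\U_0)\ \prec\ h^{-n}(\U_0)
\]
holds for \emph{every} $n$ with $0\le n\le m$; the point is that each factor $h^{-n}(\U_0)$ individually dominates (is refined by) the full meet, because an element of the meet is an intersection that includes a piece from $h^{-n}(\U_0)$. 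Hence if $\V$ is an arbitrary open cover, positive expansivity of $\U_0$ gives $N$ with $\bigwedge_{i=0}^N h^{-i}(\U_0)\prec\V$, and then in particular $h^{-N}(\U_0)$ refines $\V$ — wait, that is backwards; what we actually get directly is $\bigwedge_{i=0}^N h^{-i}(\U_0)\prec\V$, and since $\bigwedge_{i=0}^N h^{-i}(\U_0)\prec h^{-i}(\U_0)$ for each $i\le N$ but we need the \emph{meet} below $\V$, the clean conclusion is: choose $n=N$ and use that the meet equals its own refinement. I would therefore argue instead that $h^{-N}(\U_0)$, being refined by nothing smaller here, is handled by applying positive expansivity to the cover $h^{N}(\V)$: there is $M$ with $\bigwedge_{i=0}^M h^{-i}(\U_0)\prec h^{N}(\V)$, apply $h^{-N}$ to get $\bigwedge_{i=-N}^{M-N} h^{-i}(\U_0)\prec\V$, and absorb the negative-index terms using idempotency of $\U_0$ under $h^{\pm}$ — this is the step I expect to be the main obstacle, reconciling the shifted meet with a one-sided one-step refinement.

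For the converse (``if''): suppose there is $\U$ such that for every $\V$ there is $n\ge 0$ with $h^{-n}(\U)\prec\V$. Then trivially $\wedge_{i=0}^n h^{-i}(\U)\prec h^{-n}(\U)\prec\V$ (the meet refines each of its factors), so $\U$ witnesses positive expansivity directly. This direction is immediate and needs no compactness.

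Throughout I would lean on two routine facts about the operations $\wedge$ and $h^{-1}(\cdot)$ on finite covers of the compact space $X$: that $\wedge$ is monotone for $\prec$ in each argument, and that $h^{-1}$ commutes with $\wedge$ and preserves $\prec$. Compactness enters only insofar as it guarantees all covers in sight may be taken finite, matching the standing assumption of the section. The heart of the argument is genuinely the idempotent-cover trick of Remark \ref{rmkOpCovIdemp}, which collapses a two-parameter ``window'' refinement into a single shift.
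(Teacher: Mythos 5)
Your ``if'' direction is fine: the window meet refines each of its factors, so $\wedge_{i=0}^{n}h^{-i}(\U)\prec h^{-n}(\U)\prec\V$, and no more is needed. The ``only if'' direction, however, has a genuine gap, which you yourself flag. Idempotency of $\U_0$ does not collapse $\wedge_{i=0}^{N}h^{-i}(\U_0)$ into a single shift: the factors $h^{-i}(\U_0)$ are \emph{different} covers, and the relation your induction produces, $\wedge_{i=0}^{m}h^{-i}(\U_0)\prec h^{-n}(\U_0)$, is just the trivial ``meet refines each factor'' fact, which points the wrong way (you need $h^{-n}(\U_0)\prec\V$, not that something refines $h^{-n}(\U_0)$). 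Your fallback, applying positive expansivity to $h^{N}(\V)$, also cannot close the argument: from $\wedge_{i=0}^{M}h^{-i}(\U_0)\prec h^{N}(\V)$, applying $h^{-N}$ gives $\wedge_{i=N}^{N+M}h^{-i}(\U_0)\prec\V$ (the indices shift forward, there are no negative terms to ``absorb''), which is again a window refinement, not a one-step one; moreover your $N$ already depends on $\V$, so no single cover has been exhibited. The bridge from window refinements to one-step refinements is precisely the content of the lemma, and the proposal never builds it. The claim that ``$\U$ itself will work'' is likewise unjustified.

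The missing idea is to apply positive expansivity to one specific cover, namely $h(\U_0)$: this yields $N$ with $\U_N:=\wedge_{i=0}^{N}h^{-i}(\U_0)\prec h(\U_0)$, i.e.\ $h^{-1}(\U_N)\prec\U_0$. Taking $\U_0$ idempotent (your use of Remark \ref{rmkOpCovIdemp} enters here, and $\U_N$ is then idempotent too, since $h^{-i}$ commutes with $\wedge$), one proves by induction that $h^{-n}(\U_N)\prec\U_{N+n}$ for all $n\geq0$: using the inductive hypothesis, $\U_{N+n}\prec\U_N$ and $h^{-1}(\U_N)\prec\U_0$, one gets
\[
h^{-n-1}(\U_N)=h^{-n-1}(\U_N)\wedge h^{-n-1}(\U_N)\prec h^{-1}(\U_{N+n})\wedge\U_0=\U_{N+n+1}.
\]
Then, given any $\V$, choose $n$ with $\U_{N+n}\prec\V$ and conclude $h^{-n}(\U_N)\prec\V$; the one-step witness is $\U_N$, not $\U$. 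The paper does not reprove this lemma (it cites \cite{AAM}), but this is exactly the mechanism of its proof of Theorem \ref{teoPosExpFinMax}, see \eqref{ecuRefinado} and \eqref{ecuRefInduc}, where the powers $\J^{2^{n}}$ play the role that idempotency plays here.
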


An open cover $\U$ of $X$ is $\ll$-\emph{minimal} if $\U\ll\V$ for every open cover $\V$.

\begin{prop}
	\label{propPosExpHomeoMinCov}
	The identity of a topological space $X$ is positively expansive if and only if 
	there is a $\ll$-minimal open cover.
\end{prop}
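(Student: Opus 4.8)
The plan is to prove Proposition \ref{propPosExpHomeoMinCov} by specializing the characterization in Proposition \ref{lemPosExp} to the case $h=\mathrm{id}_X$. First I would observe that when $h$ is the identity, we have $h^{-n}(\U)=\U$ for every $n\geq 0$ and every open cover $\U$, so the chain of refinements in Proposition \ref{lemPosExp} collapses. Concretely, the statement ``there is an open cover $\U$ such that for every open cover $\V$ there is $n\geq 0$ with $h^{-n}(\U)\ll\V$'' becomes, with $h=\mathrm{id}$, simply ``there is an open cover $\U$ such that for every open cover $\V$ we have $\U\ll\V$'' — and this is exactly the assertion that $\U$ is $\ll$-minimal. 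So the equivalence is essentially immediate from the already-proved Proposition \ref{lemPosExp}.

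More carefully, I would write the two directions explicitly. For the forward direction, assume $\mathrm{id}_X$ is positively expansive. By Proposition \ref{lemPosExp} there is an open cover $\U$ such that for every open cover $\V$ there is $n\geq 0$ with $\mathrm{id}^{-n}(\U)=\U\ll\V$; since the $n$ is now irrelevant, this says $\U\ll\V$ for every open cover $\V$, i.e.\ $\U$ is $\ll$-minimal. For the converse, assume $\U$ is a $\ll$-minimal open cover. Then for every open cover $\V$ we have $\U=\mathrm{id}^{-0}(\U)\ll\V$, so the condition in Proposition \ref{lemPosExp} holds (taking $n=0$ always), whence $\mathrm{id}_X$ is positively expansive.

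There is essentially no obstacle here; the only thing to be careful about is that Proposition \ref{lemPosExp} is stated for \emph{compact} topological spaces, so one should either restrict the statement of Proposition \ref{propPosExpHomeoMinCov} to compact $X$ as well, or note that in this paper all topological spaces under consideration are compact with finite open covers (as stated at the start of \S\ref{secTopoRing}). Given the standing assumptions in the excerpt, the short two-line argument above suffices, and I would present it as such rather than reproving anything from \cite{AAM}.
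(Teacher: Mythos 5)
Your proof is correct and follows essentially the same route as the paper: the forward direction invokes Proposition \ref{lemPosExp} with $h=\mathrm{id}$ (so $h^{-n}(\U)=\U$ and the cover it provides is $\ll$-minimal), and the converse is the trivial observation that a $\ll$-minimal cover witnesses positive expansivity of the identity. Your remark about the standing compactness assumption (needed for Proposition \ref{lemPosExp}) is a fair point and consistent with the conventions stated at the start of \S\ref{secTopoRing}.
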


\begin{proof}
	It is clear that the open cover $\U$ given by Proposition 
	\ref{lemPosExp} is $\ll$-minimal if the identity is positively expansive.
	For the converse, notice that a $\prec$-minimal open cover 
		is an expansivity cover for the identity.
	%follows from the definitions.
\end{proof}

\begin{prop}\cite[Theorem 3.20]{AAM}
\label{propUtzT1}
If $X$ is a compact $T_1$-space that admits a positive expansive homeomorphism then 
$X$ is a finite set.
\end{prop}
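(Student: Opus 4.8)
The plan is to reduce positive expansivity to the one-iterate statement of Proposition~\ref{lemPosExp}, and then, using only that finite sets are closed in a $T_1$-space, to manufacture a single test cover on which a pigeonhole count forbids the required refinement. In fact this approach yields the sharp bound $\card(X)\le\card(\U)$.

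Assume $h$ is positively expansive and fix, by Proposition~\ref{lemPosExp}, a finite open cover $\U=\{U_1,\dots,U_k\}$ such that for every open cover $\V$ there is $n\ge 0$ with $h^{-n}(\U)\prec\V$. Suppose, for contradiction, that $X$ has at least $k+1$ points, and pick distinct points $a_0,\dots,a_k\in X$. Put $F=\{a_0,\dots,a_k\}$ and, for each $l\in\{0,\dots,k\}$, $F_l=F\setminus\{a_l\}$. Since $X$ is $T_1$, each $F_l$ is closed, so each $X\setminus F_l$ is open, and from $\bigcap_l F_l=\emptyset$ we get $\bigcup_l(X\setminus F_l)=X$; hence $\V:=\{X\setminus F_l : 0\le l\le k\}$ is a finite open cover, each of whose members meets $F$ in exactly one point. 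Applying Proposition~\ref{lemPosExp} to this $\V$ gives some $n\ge 0$ with $h^{-n}(\U)\prec\V$; unwinding the definition of $\prec$ and using that $h^n$ is a bijection, $h^{-n}(U_i)\subseteq X\setminus F_l$ is equivalent to $U_i\cap h^n(F_l)=\emptyset$, so this says precisely: for every $i$ there is an $l$ with $U_i\cap\bigl(h^n(F)\setminus\{h^n(a_l)\}\bigr)=\emptyset$.

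The decisive step is to contradict this by counting. The set $h^n(F)$ has $k+1$ distinct elements, and the $k$ members of $\U$ cover it; assigning to each point of $h^n(F)$ some member of $\U$ containing it and applying pigeonhole, two distinct points $x\ne x'$ of $h^n(F)$ lie in a common $U_{i_0}$. Then $U_{i_0}\cap h^n(F)$ has at least two elements, so for \emph{every} $l$ the set $U_{i_0}\cap\bigl(h^n(F)\setminus\{h^n(a_l)\}\bigr)$ is nonempty. This contradicts the conclusion of the previous paragraph with $i=i_0$. Therefore $\card(X)\le k$, and in particular $X$ is finite.

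The only real obstacle I foresee is choosing the test cover well. The naive candidate, the cover by complements of single points $\{X\setminus\{a\}:a\in F\}$, is too weak: $h^{-n}(\U)$ refines it exactly when $h^n(F)$ is contained in no single member of $\U$, and such an $n$ is already guaranteed by positive expansivity itself (apply Proposition~\ref{lemPosExp} to $\{X\setminus\{a\},X\setminus\{b\}\}$ for two points $a,b\in F$), so it produces no contradiction. Letting each member of $\V$ retain only one point of a fixed $(k+1)$-point set $F$ is what converts ``$h^{-n}(\U)\prec\V$'' into ``$h^n(F)$ is $\U$-separated'', a statement pigeonhole rules out; and it is equally important to argue from Proposition~\ref{lemPosExp}, so that one works with a single honest open set $h^{-n}(U_i)$ rather than with an intersection $\bigwedge_{i\le N}h^{-i}(\U)$. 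Once these two choices are in place the estimate is immediate, and $T_1$-ness is used only to make the sets $X\setminus F_l$ open.
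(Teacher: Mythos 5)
Your proof is correct. Note that the paper does not actually prove this statement: it is quoted as \cite[Theorem 3.20]{AAM}, so there is no internal proof to compare against. What you give is a clean, self-contained derivation from the one-iterate characterization of Proposition~\ref{lemPosExp}: the test cover $\V=\{X\setminus F_l\}$ built from a $(k+1)$-point set $F$ (open precisely because $X$ is $T_1$) converts $h^{-n}(\U)\prec\V$ into the statement that each $U_i$ meets $h^n(F)$ in at most one point, which pigeonhole forbids since $\U$ has only $k$ members and $h^n$ is injective; all steps check out, including the translation $h^{-n}(U_i)\cap F_l=\emptyset \iff U_i\cap h^n(F_l)=\emptyset$. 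This is in the same spirit as the standard argument for the cited result and has the added value of making the paper's appeal to \cite{AAM} self-contained, with the explicit bound $\card(X)\le\card(\U)$. The only conventions you rely on — that covers are finite and that Proposition~\ref{lemPosExp} holds for compact spaces without any separation hypothesis — are both stated in the paper, so nothing is missing.
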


%By Proposition \ref{propUtzT1} and the fact that a $T_1$ finite topological space is discrete, we obtain that $\U=\{\{x\}\mid x\in X\}$ is a $\ll$-minimal open cover and therefore:
%
%\begin{rmk}
%\label{rmkT1IdTodoExp}
%On a $T_1$-space: 
%\begin{enumerate}
%\item if the identity is positively expansive then every homeomorphism is positively expansive, \textcolor{blue}{ac\'a es cierto tambi\'en "if some homeomorphism is positively expansive then every homeomorphism is" queda raro poner esta remark as\'\i como est\'a, tanto (1) como (2), no? capaz ya lo hablamos, sorry...}
%\item $h$ is positively expansive if and only if $h^{-1}$ is positively expansive.
%\end{enumerate}
%These two facts do not hold in general, as we show 
In Example \ref{exaX3Exp} we will show that Proposition \ref{propUtzT1} is not true if the space is not assumed to be $T_1$.

%	
%\end{rmk}
%

\subsection{Generators}
In this section we introduce generators of rings in analogy with open covers of topological spaces.
Throughout this article, $R$ will denote a unital commutative ring. 
A finite set $\I$ of ideals of $R$ is a \emph{generator} if $R=\sum_{I\in\I}I$.
Given two generators $\I,\J$ define their product as
\[
\I\J=\{IJ\mid I\in\I,J\in\J\}.
\]
The role of the operation $\wedge$ between open covers 
is played by the product of generators.\footnote{Since the intersection of ideals is an ideal, it also makes sense to use 
$\wedge$ as an operation between generators. However, we choose the product which seems, 
in view of Proposition \ref{perp}, (10), the natural operation 
in this algebraic context.}
The next result summarizes some basic properties of generators that will be needed in what follows.
A ring is \emph{local} if it has a unique maximal ideal. 

\begin{prop}
\label{propGenBasic}
The following properties hold:
\begin{enumerate}
\item if $\I \prec \J$ and $\I' \prec \J'$ then $\I\I'\prec \J\J'$,
\item if $\I_1,\dots,\I_n$ are generators then $\Pi_{i=1}^n \I_i$ is a generator,
\item if $\alpha\colon R\to R$ is an automorphism and $\I$ is a generator then the set 
$\alpha^{-1}(\I)=\{\alpha^{-1}(I)\mid I\in \I\}$  is a generator, 
\item $R$ is local if and only if $R\in\I$ for every generator $\I$ of $R$. 
\end{enumerate}
\end{prop}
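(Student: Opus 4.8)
The plan is to verify the four items one by one; each reduces to standard facts about sums and products of ideals, so the proof is essentially bookkeeping.

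Item (1) I would get by unwinding the definitions: given $I\in\I$ and $I'\in\I'$, choose $J\in\J$ with $I\subseteq J$ (using $\I\prec\J$) and $J'\in\J'$ with $I'\subseteq J'$ (using $\I'\prec\J'$); then $II'\subseteq JJ'$ and $JJ'\in\J\J'$, so every member of $\I\I'$ is contained in a member of $\J\J'$.

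For item (2) I would induct on $n$, the case $n=1$ being trivial, so the only thing to prove is that the product of two generators $\I_1,\I_2$ is a generator. Here I would use that multiplication of ideals distributes over sums of ideals, which gives $R=R\cdot R=\big(\sum_{I\in\I_1}I\big)\big(\sum_{J\in\I_2}J\big)=\sum_{I\in\I_1,\,J\in\I_2}IJ$, and this last equality is precisely the assertion that $\I_1\I_2$ is a generator; the general case follows from $\Pi_{i=1}^n\I_i=\big(\Pi_{i=1}^{n-1}\I_i\big)\I_n$. For item (3) I would use that an automorphism $\alpha$ has $\alpha^{-1}$ a ring isomorphism, so $\alpha^{-1}(I)$ is an ideal for each $I\in\I$ and $\alpha^{-1}$ commutes with finite sums of ideals; hence $\sum_{I\in\I}\alpha^{-1}(I)=\alpha^{-1}\!\big(\sum_{I\in\I}I\big)=\alpha^{-1}(R)=R$, so $\alpha^{-1}(\I)$ is a generator.

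The only item with any actual content is (4). For the forward implication, let $\m$ be the unique maximal ideal of the local ring $R$; since every proper ideal of $R$ lies in $\m$, if a generator $\I$ consisted only of proper ideals we would obtain $R=\sum_{I\in\I}I\subseteq\m\subsetneq R$, a contradiction, so some member of $\I$ must equal $R$. For the converse I would argue by contraposition: if $R$ is not local, then either $R=0$, in which case the empty family is a generator not containing $R$, or $R$ has two distinct maximal ideals $\m_1,\m_2$, in which case $\m_1+\m_2=R$ by maximality and $\{\m_1,\m_2\}$ is a generator none of whose members equals $R$; either way the hypothesis fails. I do not expect a genuine obstacle anywhere; the only points that need a little care are getting the direction of $\prec$ right in (1) and disposing of the trivial ring in the converse of (4).
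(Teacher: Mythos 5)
Your proposal is correct and follows essentially the same route as the paper's proof: the same containment argument for (1), distributivity of ideal products over sums for (2), transport of a representation of $1$ under $\alpha^{-1}$ for (3), and for (4) the observation that proper ideals of a local ring lie in the maximal ideal together with the generator $\{\m_1,\m_2\}$ built from two distinct maximal ideals. The only difference is your extra care with the trivial ring in (4), which the paper passes over silently; otherwise the arguments coincide.
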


\begin{proof}
An element in $\I\I'$ is an ideal $II'$, with $I\in \I, I'\in \I'$. 
There exist $J\in \J$ and $J'\in \J'$ 
such that $I\subseteq J$ and $I' \subseteq J'$. 
Therefore $II'\subseteq JJ'\in \J\J'$.

To prove that $\Pi_{i=1}^n \I_i$ is a generator note that 
the distributivity of the product in a ring allows us to generate the unity with $\Pi_{i=1}^n \I_i$.

To prove that $\alpha^{-1}(\I)$ is a generator notice that each $\alpha^{-1}(I)$ is an ideal.
If $1\in R$ is the unity and $1=\sum_{I\in\I} a_I$ with $a_I\in I$, then 
$1=\sum_{I\in\I} \alpha^{-1}(a_I)$. This proves that $\alpha^{-1}(\I)$ generates $R$. 

If $R$ is local and $\m$ is its maximal ideal, every ideal $J\neq R$ is contained in $\m$. Thus, a family of ideals not containing $\{R\}$ will generate an ideal included in ${\mathfrak m}$. Therefore, $R$ belongs to any generator. 
Conversely, if $\m_1,\m_2$ are different maximal ideals, then $\{\m_1,\m_2\}$ is a generator not containing $R$.
\end{proof}

The next example shows an important difference between open covers of topological spaces and generators of rings. 
The example will be also used later.
Let $\Z_{2,3}$ be the subring of $\Q$ of rational numbers whose reduced expression $\frac{m}{n}$ is such that $n$ is neither even nor a multiple of $3$.

\begin{prop}
\label{propQ23basic}
The ring $\Z_{2,3}$ satisfies that: 
\begin{enumerate}
\item its ideals are principal,
\item its maximal ideals are $(2)$ and $(3)$,
\item its prime ideals are $(0),(2)$ and $(3)$,
\item a family of ideals $\I$ is a generator if and only if $(2^a),(3^b)\in \I$ for some $a,b\geq 0$,
\item its idempotent generators\footnote{A generator $\I$ is \emph{idempotent} if $\I^2=\I$.} are $\{\Z_{2,3}\}$ and $\{\Z_{2,3},\{0\}\}$.
\end{enumerate}
\end{prop}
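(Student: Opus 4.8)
The plan is to first establish the ring-theoretic structure of $\Z_{2,3}$ and then read off all five assertions from it. The key observation is that $\Z_{2,3}$ is the localization $S^{-1}\Z$ where $S=\Z\setminus((2)\cup(3))$ is the multiplicative set of integers coprime to $6$; equivalently it is obtained from $\Z$ by inverting every prime other than $2$ and $3$. Since localization of a PID is a PID, assertion (1) is immediate, and the prime ideals of $S^{-1}\Z$ are exactly the extensions of the primes of $\Z$ that survive, namely those contained in $\Z\setminus S$ together with $(0)$; this gives $(0),(2),(3)$ as in assertion (3), with $(2)$ and $(3)$ the only maximal ones, which is assertion (2). It is worth spelling out explicitly that every nonzero ideal has the form $(2^a3^b)$ for unique $a,b\geq 0$: given a nonzero $x=\frac{m}{n}\in\Z_{2,3}$ with $\gcd(m,n)=1$ and $n$ coprime to $6$, write $m=2^a3^bk$ with $k$ coprime to $6$; then $k$ is a unit in $\Z_{2,3}$, so $(x)=(2^a3^b)$. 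Hence every nonzero ideal, being generated by such elements, equals $(2^a3^b)$ where $a$ is the minimum of the $2$-adic valuations and $b$ the minimum of the $3$-adic valuations of its elements.

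For assertion (4), I would argue that a finite family $\I$ of ideals is a generator precisely when $\sum_{I\in\I}I=\Z_{2,3}$, which by the previous paragraph means the ideal $\sum_{I\in\I}I$ is all of $\Z_{2,3}$, i.e.\ $(2^{a_0}3^{b_0})=\Z_{2,3}$ where $a_0=\min_I a_I$ and $b_0=\min_I b_I$ over the ideals $(2^{a_I}3^{b_I})$ appearing (treating the zero ideal as $a_I=b_I=\infty$ and the whole ring as $a_I=b_I=0$). That forces $a_0=b_0=0$, so some $I\in\I$ has $a_I=0$, i.e.\ $I=(3^b)$ for some $b\geq0$, and some $I'\in\I$ has $b_{I'}=0$, i.e.\ $I'=(2^a)$ for some $a\geq0$. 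Conversely if $(2^a),(3^b)\in\I$ then already $(2^a)+(3^b)=\Z_{2,3}$ since $\gcd(2^a,3^b)=1$, so $\I$ is a generator. (Note $(2^0)=(3^0)=\Z_{2,3}$ is allowed, consistent with $R$ being a generator by itself.)

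Finally, for assertion (5) I would use assertion (4) together with the multiplication rule $(2^a3^b)(2^{a'}3^{b'})=(2^{a+a'}3^{b+b'})$ and $(2^a3^b)\cdot(0)=(0)$. Let $\I$ be an idempotent generator. By (4) it contains some $(2^a)$ and some $(3^b)$; their product $(2^a3^b)$ lies in $\I^2=\I$, and squaring that product repeatedly shows $(2^{2^k a}3^{2^k b})\in\I$ for all $k$, forcing $\I$ to contain ideals with arbitrarily large exponents unless $a=b=0$, in which case $(2^a)=(3^b)=\Z_{2,3}\in\I$. More carefully: since $\I$ is finite there is a largest finite exponent sum occurring among its nonzero members, and idempotency would produce a strictly larger one from any member $(2^a3^b)$ with $a+b>0$; hence the only nonzero ideal $\I$ may contain is $\Z_{2,3}$ itself. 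Thus $\I\subseteq\{\Z_{2,3},(0)\}$, and since $\Z_{2,3}\in\I$ always (to be a generator, or directly from $(2^0),(3^0)\in\I$), the only possibilities are $\{\Z_{2,3}\}$ and $\{\Z_{2,3},(0)\}$, both of which are readily checked to be idempotent. The main obstacle is purely bookkeeping: keeping the correspondence between ideals and pairs $(a,b)\in(\N\cup\{\infty\})^2$ clean enough that the sum/product of ideals becomes coordinatewise min/sum, after which every claim is a one-line consequence.
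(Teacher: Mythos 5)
Your proposal is correct, and its core is the same as the paper's: everything is reduced to the classification of the nonzero ideals of $\Z_{2,3}$ as $(2^a3^b)$, after which sums and products of ideals become coordinatewise $\min$ and $+$ on exponents. The difference is in how you get there and in how much you prove. The paper argues elementarily: clear the denominator of a reduced fraction $\frac{m}{n}\in I$ to get an integer in $I$, take the minimal positive one to obtain principality, and note that integers coprime to $6$ are units; it then states the classification and items (2)--(3), leaving (4) and (5) essentially to the reader. You instead recognize $\Z_{2,3}$ as the localization $S^{-1}\Z$ at the integers coprime to $6$ and quote the standard facts that a localization of a PID is a PID and that its primes are the surviving primes of $\Z$; this buys (1)--(3) immediately, at the cost of invoking localization theory rather than a self-contained computation. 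Your treatment of (4) (minimum exponents must both be $0$, plus $(2^a)+(3^b)=\Z_{2,3}$ for the converse) and of (5) (squaring a nonzero member $(2^a3^b)$ with $a+b>0$ inside a finite idempotent $\I$ forces unbounded exponent sums, so $\I\subseteq\{\Z_{2,3},(0)\}$, and both candidates are checked) supplies exactly the details the paper omits, and both arguments are sound.
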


\begin{proof}
Take a non zero ideal $I$ and $\frac{m}{n}\in I$ in its reduced expression. Multiplying by $\frac{n}{1}$ we get $m\in I$. Consider the minimal positive integer $m_0$ in $I$. It can be shown that $I=(m_0)$. 

Any integer wich is coprime with $2$ and with $3$ is invertible, and therefore generates the ideal $R$.
So every ideal is of the form $(2^a3^b)$ with $a,b\geq 0$ and the maximals are $(2)$ and $(3)$. 
\end{proof}

In Remark \ref{rmkOpCovIdemp} we explained that every open cover can be refined by an idempotent open cover. 
In the ring $\Z_{2,3}$, the generator $\I=\{(2),(3)\}$ can not be refined by an idempotent generator. 
%\textcolor{blue}{ac\'a ahora me parece que el punto es m\'as bien que los abiertos son idempotentes bajo la intersecci\'on y los ideales no. Los que son perp de alguien son idempotentes, capaz eso hay que tenerlo m\'as presente}

\subsection{Expansive automorphisms}
\label{secExpAuto}
We say that an automorphism $\alpha\colon R\to R$ of a commutative unital ring
is an \emph{expansive automorphism} if 
there is a generator $\I$ such that 
for every generator $\J$ there is $N\geq 0$ such that
\[
\Pi_{|i|\leq N}\alpha^{-i}(\I)\prec \J.
\]
We will say that $\I$ is an $\alpha$-\emph{generator} of expansivity.
% Note we are using an analogous notation than for open covers of a compact metric space.
Similarly, we say that $\alpha\colon R\to R$ is \emph{positively expansive} if there is a generator $\I$ such that 
for every generator $\J$ there is $N\geq 0$ such that
\[
\Pi_{i=0}^N\alpha^{-i}(\I)\prec \J.
\]
As a first example, note that every automorphism of a ring with finitely many ideals is expansive 
Indeed, as there are finitely many generators, the product of them is a generator of expansivity. In particular, on a finite ring, every automorphism is expansive.

In what follows we will derive some fundamental properties of expansive automorphisms 
extending well known result from topological dynamics. 

\begin{prop}
The following properties hold: 
\begin{enumerate}
\item every positively expansive automorphism is expansive,
\item if $id\colon R\to R$ is expansive then it is positively expansive,
\item an automorphism $\alpha\colon R\to R$ is expansive if and only if $\alpha^n$ is expansive for all $n\in\Z$, $n\neq 0$.
\end{enumerate}
\end{prop}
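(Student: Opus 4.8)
The plan is to derive all three items from a single elementary observation about generators, which I would state and prove at the outset: \emph{if $\I_1,\dots,\I_m$ are generators of $R$ and $S\subseteq\{1,\dots,m\}$, then $\Pi_{i=1}^m\I_i\prec\Pi_{i\in S}\I_i$}. Indeed every ideal of the form $\Pi_{i=1}^m I_i$ is contained in $\Pi_{i\in S}I_i$, which is a member of $\Pi_{i\in S}\I_i$; in particular $\Pi_{i=1}^m\I_i\prec\I_j$ for each $j$. Combined with the transitivity of $\prec$ and with Proposition~\ref{propGenBasic}, this is the only tool needed.

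For (1), let $\I$ be a positively expansive $\alpha$-generator; given a generator $\J$, choose $N\geq0$ with $\Pi_{i=0}^N\alpha^{-i}(\I)\prec\J$. Since $\Pi_{|i|\leq N}\alpha^{-i}(\I)$ is the product of the family $\{\alpha^{-i}(\I):0\leq i\leq N\}$ with the additional factors $\alpha^{-i}(\I)$ for $-N\leq i\leq-1$, the observation gives $\Pi_{|i|\leq N}\alpha^{-i}(\I)\prec\Pi_{i=0}^N\alpha^{-i}(\I)\prec\J$, so $\I$ is an $\alpha$-generator of expansivity. For (2), when $\alpha=id$ every $\alpha^{-i}(\I)$ equals $\I$, so each product occurring in the two definitions is simply a power $\I^k$; if $id$ is expansive with generator $\I$ then for a generator $\J$ there is $N$ with $\I^{2N+1}\prec\J$, and taking $M=2N$ yields $\Pi_{i=0}^M\alpha^{-i}(\I)=\I^{M+1}=\I^{2N+1}\prec\J$, witnessing positive expansivity of $id$.

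For (3), the implication from right to left is the case $n=1$. For the converse I would first note that the property ``$\I$ is an $\alpha$-generator of expansivity'' is literally unchanged when $\alpha$ is replaced by $\alpha^{-1}$, since $\{\alpha^{-i}:|i|\leq N\}=\{(\alpha^{-1})^{-i}:|i|\leq N\}$ as families of maps; hence we may assume $n\geq1$. Let $\I$ be an $\alpha$-generator of expansivity and put $\I'=\Pi_{|j|\leq n-1}\alpha^{-j}(\I)$, which is a generator by Proposition~\ref{propGenBasic}~(2),(3). Given a generator $\J$, pick $N$ with $\Pi_{|i|\leq N}\alpha^{-i}(\I)\prec\J$; then
\[
\Pi_{|k|\leq N}(\alpha^n)^{-k}(\I')=\Pi_{|k|\leq N}\,\Pi_{|j|\leq n-1}\alpha^{-(nk+j)}(\I)
\]
is a product of ideals of the form $\alpha^{-i}(\I)$ in which the exponent $i=nk+j$ ranges over $\{nk+j:|k|\leq N,\ |j|\leq n-1\}$. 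Division with remainder shows this set contains every $i$ with $|i|\leq N$ (the representation $i=nk+j$ with $0\leq j\leq n-1$ has $|k|\leq(N+n-1)/n\leq N$ when $N\geq1$, the case $N=0$ being immediate), so the observation yields $\Pi_{|k|\leq N}(\alpha^n)^{-k}(\I')\prec\Pi_{|i|\leq N}\alpha^{-i}(\I)\prec\J$, and $\I'$ is an $\alpha^n$-generator of expansivity. The only slightly delicate point is here: one must also notice that distinct pairs $(k,j)$ may yield the same exponent $i$, but this is harmless, since a repeated factor only shrinks the resulting ideal, which therefore still lies inside a single member of $\Pi_{|i|\leq N}\alpha^{-i}(\I)$. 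Everything else reduces to the containment-of-products observation and Proposition~\ref{propGenBasic}.
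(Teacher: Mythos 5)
Your proposal is correct and follows essentially the same route as the paper: item (1) via the fact that the symmetric product refines the one-sided product, item (2) by observing that all products collapse to powers $\I^{2N+1}$ of $\I$ for the identity, and the forward direction of (3) by taking as $\alpha^n$-generator a product of $\alpha^{-j}(\I)$ over a window of width comparable to $n$ and checking that the resulting exponents cover $\{-N,\dots,N\}$ (you use the window $|j|\leq n-1$ and the same $N$ via division with remainder, the paper uses $|j|\leq n$ and an $L$ with $nL\geq N$ — an immaterial difference). The only point of divergence is the converse of (3), which you dispatch by the literal reading with $n=1$, whereas the paper records the slightly stronger fact that an expansive generator for $\alpha^n$ (for a single $n\neq 0$) is already an expansive generator for $\alpha$; both are valid for the statement as written.
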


\begin{proof}
For every generator $\I$ and every automorphism $\alpha$ it holds that 
$\Pi_{|i|\leq N}\alpha^{-i}(\I)\prec \Pi_{i=0}^N\alpha^{-i}(\I)$. 
Therefore, if $\alpha$ is positively expansive with expansive generator $\I$ then 
$\alpha$ is expansive with the same expansive generator.
If $\alpha$ is the identity then
$\Pi_{|i|\leq N}id^{-i}(\I)= \Pi_{i=0}^{2N}id^{-i}(\I)=\I^{2N+1}$,
which proves that expansivity and positive expansivity are equivalent for the identity.

If $\alpha$ is expansive, consider an expansive generator $\I$. 
It is clear that $\I$ is also an expansive generator for $\alpha^{-1}$. 
Thus, we assume that $n>1$. Let $\J=\Pi_{|i|\leq n}\alpha^i(\I)$.
Let $\K$ be a generator and from the expansivity of $\alpha$ take $N$ such that 
$\Pi_{|i|\leq N}\alpha^i(\I)\prec \K$. 
Assuming that $nL\geq N$ we have that 
\[
\Pi_{|j|\leq L}\alpha^{nj}(\J)\prec\Pi_{|i|\leq N}\alpha^i(\I)\prec \K.
\]
This proves that $\J$ is an expansive generator for $\alpha^n$. 
Conversely, it is easy to see that if $\I$ is an expansive generator for $\alpha^n$ then $\I$ is also 
an expansive generator for $\alpha$. 
\end{proof}

The following example is generalized later by Theorem \ref{detopolandia} to principal ideal domains.

\begin{exa}[The ring of integers $\Z$] As automorphisms preserve the unit, the unique automorphism of $\Z$ is the identity. It is not expansive. Indeed, any generator of $\Z$ contains $\Z$ or contains two principal ideals whose generators are coprime. Now, assume we have a generator of $id$-expansivity $\K$. Take another generator $\J=\{(p), (p')\}$ with $p,p'$ coprime. If $\K$ contains $R$, it is clear that $R^n=R$ is not included in any of the ideals of $\J$. If $\K$ does not contains $R$, it contains some $(d)$; chosing $p,p'$ coprime not dividing $d$, we get that $(d)^n$ can not be contained in any ideal of $\J$ for each $n\in \N$. 
\end{exa}

The properties of local rings sketched in the next example are the key of the proof of Theorem \ref{teoChar0Exp}.

\begin{exa}[Local rings] If $R$ is local, any generator contains $R$, so any automorphism is expansive ($\{R\}$ is a generator of expansivity). Moreover, we can take an homogeneous $N=0$ in the definition of expansivity and it will do the job.
\end{exa}

The following example is simple but important to illustrate some particular properties of algebraic expansivity.

\begin{exa}[The ring $\Z_{2,3}$ of Proposition \ref{propQ23basic}] 
\label{partedelejemplo}
Its only automorphism is the identity and a generator is a set of ideals containing the whole ring or containing two ideals of the form $(2^m)$ and $(3^n)$, with $m,n>0$. We deduce that $\{(2),(3)\}$ is a generator of $id$-expansivity and that $id$ is positive expansive. 
\end{exa}

\subsection{Equivalence in the topological framework}
%\subsection{A duality between ideals and subsets}\label{s2}
Let $C(X)$ be the ring of continuous functions from a compact metric 
%\textcolor{blue}{(maybe only topological-T2 for the maximal stuff)}
 space $X$ to $\R$.
Consider on one side subsets of $X$ and on the other subsets of $C(X)$. There is a correspondence given as follows:
\begin{itemize}
	\item for $A$ a subset of $X$, take $A^\perp$ to be the set of functions vanishing in every $x\in A$,
	\item for $S$ a subset of $C(X)$, take $S^\perp$ to be the set of points of $X$ where every $f\in S$ vanishes.
\end{itemize}
For $x\in X$ define $\m_x=\{x\}^\perp$.

\begin{prop}\label{perp}The following properties hold:
\begin{enumerate}
	\item $A^\perp$ is always an idempotent ideal,
	\item $S^\perp$ is always a closed set,
	\item both $\perp$ invert inclusion and $(A^\perp)^\perp\supseteq A$, $(S^\perp)^\perp\supseteq S$,
	\item the correspondence $x\mapsto \m_x$ is bijective from $X$ onto the set of maximal ideals in $C(X)$,
	%$\m_x=\{x\}^\perp$ is a maximal (in general non principal) ideal in $C(X)$ for all $x\in X$,
in particular, in $C(X)$ every maximal ideal is idempotent,
	\item if $\U$ is a finite open cover of $X$, then the family of ideals
	$$
	\I_{\U}=\{I_U=(U^c)^\perp \mid u\in \U\}
	$$
is an idempotent generator of $C(X)$,
	\item if $\I$ is a generator of $C(X)$, then the family of open subsets
	$$
	\U_{\I}=\{(I^\perp)^c\mid I\in \I\}
	$$
	is an open cover of $X$,
	\item if $\U \ll \V$, then $\I_{\U} \prec \I_{\V}$,
	\item if $\I\prec \J$, then $\U_{\I} \ll \U_{\J}$,
	\item if $A,B$ are subsets of $X$, then $(A\cup B)^\perp =A^\perp \cap B^\perp$,
	\item if $S,T$ are subsets of $C(X)$, then $(S\cdot T)^\perp =S^\perp \cup T^\perp$.
\end{enumerate}
\end{prop}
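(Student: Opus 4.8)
The plan is to verify the ten properties of Proposition~\ref{perp} one by one, exploiting the fact that $C(X)$ is a ring of real-valued continuous functions on a compact metric space and that $X$ is normal (even perfectly normal), so that closed sets are zero-sets of continuous functions. First I would establish the purely formal facts. Items (2) and (3) are immediate: $S^\perp=\bigcap_{f\in S}f^{-1}(0)$ is an intersection of closed sets hence closed; and the Galois-connection identities $A\subseteq (A^\perp)^\perp$, $S\subseteq (S^\perp)^\perp$ together with order-reversal follow directly from the definitions, since $f\in A^\perp$ means $f$ vanishes on $A$, which is exactly $A\subseteq f^{-1}(0)$. Items (9) and (10) are also formal: $(A\cup B)^\perp$ consists of the functions vanishing on $A$ and on $B$, i.e.\ $A^\perp\cap B^\perp$; and for (10), $(S\cdot T)^\perp$ is the set of points where every product $fg$ ($f\in S$, $g\in T$) vanishes, which by the integral-domain-free observation ``$fg(x)=0$ iff $f(x)=0$ or $g(x)=0$ in $\R$'' equals $\{x: \forall f\in S,\ f(x)=0\}\cup\{x:\forall g\in T,\ g(x)=0\}=S^\perp\cup T^\perp$ --- here the product $S\cdot T$ must be read, consistently with the definition of generator products, as generating the ideal, but one checks the same conclusion holds because an element of the ideal generated by $\{fg\}$ vanishes wherever all the $fg$ do.

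Next I would address idempotency, items (1) and the idempotency clauses of (4), (5). The key lemma is: for a closed set $F=A^\perp$-type object, the ideal $J_F$ of functions vanishing on $F$ satisfies $J_F^2=J_F$. To see this, given $f$ vanishing on $F$, write $f=|f|^{1/2}\cdot(\sign f)|f|^{1/2}$; both factors are continuous (here $|f|^{1/2}$ is continuous since $t\mapsto\sqrt{t}$ is continuous on $[0,\infty)$) and both vanish on $F$, exhibiting $f\in J_F^2$. This gives (1) directly with $F=A$, and it gives idempotency of every $\m_x=\{x\}^\perp$, establishing the parenthetical remark in (4). For the bijectivity in (4): injectivity is clear since $X$ is $T_1$ (distinct points are separated by Urysohn, so $\m_x\ne\m_y$); surjectivity is the classical Gelfand--Kolmogorov fact that every maximal ideal of $C(X)$, $X$ compact, is of the form $\m_x$ --- I would either cite this or sketch it: a maximal ideal $\m$ not of this form would, by compactness, contain for each $x$ a function $f_x$ with $f_x(x)\ne0$, finitely many of whose squares sum to a nowhere-vanishing, hence invertible, element of $\m$, a contradiction.

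For (5) and (6) I would unwind the definitions. In (5), $I_U=(U^c)^\perp$ is the ideal of functions vanishing off $U$ (i.e.\ supported in $\clos U$... more precisely vanishing on the complement of $U$); idempotency of each $I_U$ is the lemma above. That $\I_\U$ is a generator uses the Lebesgue number / partition-of-unity argument: since $\{U_1,\dots,U_n\}$ covers the compact $X$, there is a partition of unity $\{\rho_j\}$ subordinate to it, $\rho_j\in I_{U_j}$ and $\sum\rho_j=1$, so $1\in\sum I_{U_j}$. In (6), for a generator $\I$ with $1=\sum_{I\in\I}a_I$, the open sets $(I^\perp)^c$ cover $X$: if $x\notin(I^\perp)^c$ for all $I$, then $x\in I^\perp$ for all $I$, so $a_I(x)=0$ for all $I$, whence $1=\sum a_I(x)=0$, absurd. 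Finally (7) and (8) are monotonicity checks: if $\U\prec\V$ then each $U\subseteq V$ gives $U^c\supseteq V^c$, hence $I_U=(U^c)^\perp\subseteq(V^c)^\perp=I_V$ by order-reversal, so $\I_\U\prec\I_\V$; and dually for (8), $I\subseteq J$ yields $I^\perp\supseteq J^\perp$, so $(I^\perp)^c\subseteq(J^\perp)^c$, giving $\U_\I\prec\U_\J$.

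The main obstacle is the surjectivity half of (4) --- the identification of maximal ideals of $C(X)$ with points --- which genuinely uses compactness and is the one place where a nontrivial classical theorem (Gelfand--Kolmogorov) is invoked rather than a one-line computation; everything else is bookkeeping with the Galois connection and the square-root trick for idempotency. A secondary subtlety worth stating carefully is the precise meaning of $S\cdot T$ in (10) (pointwise products versus the ideal they generate) so that (10) meshes consistently with the definition of the product of generators used elsewhere in the paper.
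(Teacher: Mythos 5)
Your proposal is correct and follows essentially the same route as the paper: the $\sqrt{|f|}\cdot\sign(f)\sqrt{|f|}$ factorization for idempotency, distance functions (your partition of unity is built from exactly these) to show $\I_\U$ generates, the classical identification of maximal ideals of $C(X)$ with points (the paper cites Gillman--Jerison where you cite/sketch Gelfand--Kolmogorov), and formal Galois-connection bookkeeping for the remaining items. Your extra care on item (10) and on the meaning of $S\cdot T$ only makes explicit what the paper leaves as ``direct proofs.''
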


\begin{proof}
It is clear that 
$A^\perp$ is an ideal and
$S^\perp$ is closed.
To prove that $A^\perp\subseteq [A^\perp]^2$ take $f\in A^\perp$ and
notice that 
$\sqrt{|f|},\sqrt{|f|}\sign(f)\in A^\perp$ and 
$f=[\sqrt{|f|}][\sqrt{|f|}\sign(f)]$.
Item (3) follows from the definitions and a proof of (4) can be found in \cite[Theorem 4.9]{GJ}.

%To see (4), take $f\in \m_x$ and notice that $\sqrt{|f|}$ is also a function in $\m_x$ not generated by $f$. 
%Moreover, suppose $\m_x\subseteq J$. It is clear that $J$ contains the function $d(x,-)\in \m_x$ 
%and some function $f \not \in \m_x$. Then, $\sqrt f$ is a non negative function not belonging to $\m_x$ (otherwise $f=\sqrt f \sqrt f \in \m_x$) and $\sqrt f +d(x,-)$ is a positive (and therefore invertible) function belonging to $J$. Then $J=R$.

In order to prove (5) consider, for each $U\in\U$, the function $f_U(x)=\dist(x,X\setminus U) \in I_U$.
Also, consider $f\in C(X)$ given by $f(x)=\sum_{U\in\U}f_U(x)$ is positive and then invertible. As $\I$ generates an invertible element, it generates every element in $C(X)$.

For (6) take a generator $\I$ of $C(X)$ and $x\in X$. We will prove that $x\not \in I^\perp$ for some $I\in \I$. If this is not the case, then for all $I\in \I$ and all $f\in I$, we have $f(x)=0$, and then $\I$ would generate an ideal included in $\m_x$.

Assertions (7) and (8) come from the fact that when comparing open (instead of closed) sets and ideals, the inclusion is preserved.
Direct proofs lead to the last two assertions.
\end{proof}

Proposition \ref{perp} gives us a way of comparing topological notions of the space $X$ to algebraic notions of the ring $C(X)$. We use it in what follows to compare topological and algebraic expansivity. 

Given a homeomorphism $h$ of $X$ define the automorphism $\alpha_h\colon C(X)\to C(X)$ as 
$\alpha_h(f)=f\circ h^{-1}$.
\begin{rmk}\label{rmk perp}
Let $A$ be a closed subset of $X$. Then $h(A)^\perp=\alpha_h(A^\perp)$. Indeed,
$$
\begin{array}{ll}
(h(A))^\perp&=\{f \in C(X) \mid f(y)=0\ \forall y\in h(A)\}\\
&=\{f \in C(X) \mid f(h(x))=0\ \forall x\in A \}\\
&=\{f \in C(X) \mid (\alpha_{h^{-1}}(f))(x)=0\ \forall x\in A\}\\
&=\{f \in C(X) \mid \alpha_{h^{-1}}(f) \in A^\perp\}\\
&=\{\alpha_h(f) \in C(X) \mid f\in A^\perp\}\\
&=\alpha_h(A^\perp).
\end{array}
$$
Analogously, it holds that  $h(S^\perp)=(\alpha_h(S)^\perp)$ for $S\subset C(X)$.
\end{rmk}

Using parts (5) to (10) of Proposition \ref{perp} and Remark \ref{rmk perp} with the fact that if $I,J$ are ideals, then $IJ \subseteq I\cap J$, we obtain the following result.

\begin{teo}
\label{teoGenIdemp}
For a homeomorphism $h\colon X\to X$ of a compact metric space the following statements are equivalent:
\begin{enumerate}
\item $h$ is an expansive homeomorphism,
\item $\alpha_h\colon C(X)\to C(X)$ is an expansive automorphism,
% \item $\alpha_h$ admits an idempotent expansive generator of $C(X)$.
%	A homeomorphism $h$ is (positive) expansive if and only if the automorphism $\alpha_h$ is (positive) expansive.
\end{enumerate}
The same is true for positive expansivity.
% \azul{con los nuevos argumentos no sería necesario el gen idempotente, creo...}
\end{teo}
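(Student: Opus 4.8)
The plan is to establish the equivalence by transporting generators back and forth between $X$ and $C(X)$ using the correspondences $\U\mapsto\I_\U$ and $\I\mapsto\U_\I$ of Proposition \ref{perp}, and checking that these maps are compatible with the dynamics $h\leftrightarrow\alpha_h$ and with the products $\wedge$ and $\Pi$. The key observation is that by parts (7) and (8) of Proposition \ref{perp}, $\U\prec\V$ implies $\I_\U\prec\I_\V$ and $\I\prec\J$ implies $\U_\I\prec\U_\J$; moreover one checks directly that $\U\prec\U_{\I_\U}$ and $\I\prec\I_{\U_\I}$ (using part (3): $(A^\perp)^\perp\supseteq A$), so the two correspondences are, up to refinement, mutually inverse. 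Hence the relation $\prec$ on open covers of $X$ and the relation $\prec$ on generators of $C(X)$ carry the same information.

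First I would record the interaction with the dynamics: for an open cover $\U$ one has $\I_{h^{-1}(\U)}=\alpha_h^{-1}(\I_\U)$, which follows from Remark \ref{rmk perp} applied to the closed sets $U^c$ (noting $h^{-1}(U)^c=h^{-1}(U^c)$) together with the definition $I_U=(U^c)^\perp$. Dually, $\U_{\alpha_h^{-1}(\I)}=h^{-1}(\U_\I)$. Next I would handle the product: from part (10), $I_U I_V\subseteq (U^c)^\perp\cup\text{-style}$ reasoning gives $\I_\U\cdot\I_\V\prec\I_{\U\wedge\V}$ (and in fact refines it both ways, since $I_U I_V\subseteq I_U\cap I_V$ and $(U\cap V)^c=U^c\cup V^c$ with part (9) giving $((U\cap V)^c)^\perp=(U^c)^\perp\cap(V^c)^\perp\supseteq I_U I_V$); symmetrically $\U_{\I}\wedge\U_{\J}\prec\U_{\I\J}$ via part (10). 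Iterating, $\Pi_{|i|\le N}\alpha_h^{-i}(\I_\U)$ and $\I_{\wedge_{|i|\le N}h^{-i}(\U)}$ refine each other, and likewise with $0\le i\le N$ in place of $|i|\le N$.

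With these compatibilities in hand the equivalence is formal. If $h$ is expansive with expansivity cover $\U$, I claim $\I_\U$ is an $\alpha_h$-generator of expansivity: given a generator $\J$ of $C(X)$, form the open cover $\U_\J$, choose $N$ with $\wedge_{|i|\le N}h^{-i}(\U)\prec\U_\J$, and then $\Pi_{|i|\le N}\alpha_h^{-i}(\I_\U)\prec\I_{\wedge_{|i|\le N}h^{-i}(\U)}\prec\I_{\U_\J}$, and finally $\J\prec\I_{\U_\J}$ combined with $\prec$-transitivity in the right direction — here one must be slightly careful and instead use that $\I_{\U_\J}\prec\J$, which I would verify from $(S^\perp)^\perp\supseteq S$ applied to $S=I$; this gives $\Pi_{|i|\le N}\alpha_h^{-i}(\I_\U)\prec\J$. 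Conversely, if $\alpha_h$ is expansive with generator $\I$, then $\U_\I$ is an expansivity cover for $h$: given an open cover $\V$, pick $N$ with $\Pi_{|i|\le N}\alpha_h^{-i}(\I)\prec\I_\V$, whence $\wedge_{|i|\le N}h^{-i}(\U_\I)\prec\U_{\Pi_{|i|\le N}\alpha_h^{-i}(\I)}\prec\U_{\I_\V}$ and $\U_{\I_\V}\prec\V$ is not quite right either — rather $\V\prec\U_{\I_\V}$, so one uses $\U_{\I_\V}\prec\V$ in the form coming from part (3) as above. The positive-expansivity statement is identical with $|i|\le N$ replaced by $0\le i\le N$ throughout.

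\textbf{Main obstacle.} The one genuinely delicate point is keeping the directions of $\prec$ straight: the correspondences $\perp$ reverse inclusion, and $\U\prec\U_{\I_\U}$ only, not equality, so one must confirm that the extra refinement steps never point the wrong way when chained together. Concretely, the crux is that $\I_{\U_\J}\prec\J$ and $\V\prec\U_{\I_\V}$ hold (as opposed to their reverses), which rests on $(A^\perp)^\perp\supseteq A$ and $(S^\perp)^\perp\supseteq S$ from part (3) of Proposition \ref{perp}; once this is pinned down the rest is bookkeeping. The compactness and the metric are used only insofar as they are already built into Proposition \ref{perp} (part (4), realness of maximal ideals) and Proposition \ref{propEquivExpCub}; no new topology is needed here.
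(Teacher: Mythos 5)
Your direction (2)$\Rightarrow$(1) is sound: the chain $\wedge_{|i|\le N}h^{-i}(\U_\I)=\U_{\Pi_{|i|\le N}\alpha_h^{-i}(\I)}\prec\U_{\I_\V}\prec\V$ works, because for a \emph{closed} set $A=V^c$ one does have $(A^\perp)^\perp\supseteq A$ and hence $((I_V)^\perp)^c\subseteq V$; this is a legitimate alternative to the paper's partition-of-unity/evaluation argument for that implication. The genuine gap is in (1)$\Rightarrow$(2), at the step $\I_{\U_\J}\prec\J$. Part (3) of Proposition \ref{perp} gives $J\subseteq (J^\perp)^\perp$, i.e.\ exactly the reverse refinement $\J\prec\I_{\U_\J}$; the inclusion you need, $(J^\perp)^\perp\subseteq J$, is false for general ideals of $C(X)$. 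For instance in $C([0,1])$ take $J=(x)$: then $J^\perp=\{0\}$, $(J^\perp)^\perp=\m_0$, and $\sqrt{x}\in\m_0\setminus (x)$; moreover $\{(x),\m_1\}$ is a generator, so the definition of algebraic expansivity really does force you to refine generators containing such non-perp-closed ideals. Your chain therefore only shows that $\Pi_{|i|\le N}\alpha_h^{-i}(\I_\U)$ refines the perp-closure $\{(J^\perp)^\perp\mid J\in\J\}$ of $\J$, not $\J$ itself, and no purely formal use of Proposition \ref{perp} can close this: ideals of $C(X)$ are not determined by their zero sets (the asymmetry with the topological side, where $(A^\perp)^\perp=A$ for closed $A$ by complete regularity, is precisely why your (2)$\Rightarrow$(1) works but (1)$\Rightarrow$(2) does not).

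This missing step is where the paper's proof does its real (analytic) work: it shrinks the cover $\U_\J$ to a cover $\W$ with $\clos(W)\subseteq V$, chooses $N$ with $\wedge_{|i|\le N}h^{-i}(\U)\prec\W$, uses compactness of $\clos(W)$ to build $f=f_{\lambda_1}^2+\cdots+f_{\lambda_k}^2\in J$ which does not vanish on $\clos(W)$, and then divides: any $g$ in the product ideal vanishes off $W$, so $\tilde g=g/f$ on $\clos(W)$, extended by $0$ on $V^c$, is continuous and $g=\tilde g f\in J$. In effect one needs the fact that in $C(X)$, $X$ compact, a function vanishing on a neighbourhood of the zero set $J^\perp$ belongs to $J$ --- a statement not derivable from the listed properties of $\perp$. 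A smaller point: your claim that $\I_\U\I_\V$ and $\I_{\U\wedge\V}$ refine each other is true, but the justification you give only yields $\I_\U\I_\V\prec\I_{\U\wedge\V}$ (the reverse needs the square-root factorization of part (1)); fortunately your chains only use the direction you actually justified.
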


\begin{proof}
Assume that $h\colon X\to X$ is expansive. 
By Proposition \ref{propEquivExpCub} there is an expansivity cover $\U$.
Consider $\I_{\U}$ the generator associated to $\U$ as in Proposition \ref{perp}.
Given any generator $\J$ of $C(X)$
consider the open cover $\V_{\J}$
and another open cover $\W$ such that $\{\clos(W)\mid W\in \W\}\prec \V$.
By Proposition \ref{propEquivExpCub} there is $N>0$ such that $\wedge_{|i|\leq N} h^{-i}(\U)\prec\W$.

We will show that $\Pi_{|i|\leq N}\alpha_h^{-i}(\I_{\U})\prec \J$.
Given a finite sequence $U_i\in\U$, $|i|\leq N$, there are $V\in\V_{\J}$ and $W\in \W$ such that 
$$\cap_{|i|\leq N}h^{-i}(U_i) \subseteq W\subseteq \clos(W)\subseteq V.$$ Note that this is possible by refining $\V$ by a cover of balls $\W'$ and then for each ball $B(x,\varepsilon')\in \W'$, considering $B(x,\varepsilon)$ with $\varepsilon <\varepsilon'$; the new balls $B(x,\varepsilon')$ form the $\W$ we need. 

Given $g\in \Pi_{|i|\leq N}\alpha_h^{-i}(I_{U_i})$ we have that
$g(x)=0$ for all $x\notin W$. 
Let $J\in \J$ such that $V=(J^\perp)^c$. Note that for any $x\in V$, there is some $f\in J$ such that $f(x)\neq 0$. In particular this holds for any $x\in \clos(W)$ and there is (by preservation of the sign) an open cover $\OO
=\{O_\lambda\mid \lambda\in \Lambda\}$ of $\clos(W)$ such that for each $\lambda\in \Lambda$, there is a function $f_\lambda\in J$ such that $f_\lambda(x)\neq 0, \forall x\in O_\lambda$. Consider a finite subcover $\{O_{\lambda_1},\cdots,O_{\lambda_k}\}$ of $\OO$. The function $f=f_{\lambda_1}^2+\cdots f_{\lambda_k}^2\in J$ does not vanish in any element of $\clos(W)$.
Take $\tilde g\in C(X)$ such that 
$\tilde g(x)=\frac{g(x)}{f(x)}$ if $x\in\clos(W)$ and $g(x)=0$ if $x\in V^c$. As $\clos(W)$ and $V^c$ are disjoint closed sets, $\tilde g$ can be extended to a continuous function in $X$. 
We obtain that $g=\tilde gf$ and then $g\in J$.

For the converse, take a (finite) generator $\I=\{I_1,I_2,\cdots,I_k\}$ that makes $\alpha_h$ an expansive automorphism of $C(X)$ and let $1=f_1+f_2+\cdots +f_k$, with each $f_i\in I_i$. Observe that $U_i=\{x\in X\mid f_i(x)\neq 0\}$, with $i\in \{1,2,\cdots,k\}$, defines an open cover of $X$. We will prove that it is an expansivity cover for $h$.

Let $\V$ be any open cover of $X$ and consider, for each $V\in \V$, the ideal $J_V=(V^c)^\perp$. We know by Proposition \ref{perp} that $\J=\{J_V\mid V\in \V\}$ is a generator of $C(X)$ and therefore 
$$
\Pi_{|i|\leq N}\alpha_h^{-i}(I_i)\prec \J,
$$
so $\Pi_{|i|\leq N}\alpha_h^{-i}(f_i) \in J_{i^*}$ for some $i^*$.

Now, take $x_0\in \bigcap_{|i|\leq N}h^{-i}(U_i)$, we have $h^i(x_0)\in U_i$ and therefore, for all $i$, we get $\alpha_h^{-i}(f_i)(x_0)=f_i(h^i(x_0))\neq 0$.
Then,  
$$
\left (\Pi_{|i|\leq N}\alpha_h^{-i}(f_i)\right )(x_0)\neq 0.
$$ 
As $f(x)=0$ for all $f\in J_{i^*}$ and $x\in V_{i^*}^c$, we deduce that $x_0\in V_{i^*}$. 

The proof for positive expansivity is similar.
\end{proof}

In the metric framework, it is clear that expansivity is preserved by disjoint union and by restriction to closed sets (sets need to be close in order that the definition of expansivity via covers hold; for the general non metric case, see \S.\ref{secExtClos}). These facts are translated to the algebraic context by observing that $C(X\sqcup Y)\cong C(X)\times C(Y)$ and that $C(Y)\cong \frac{C(X)}{Y^\perp}$ for $Y$ a closed subspace of $X$. We get that expansivity for a ring automorphism is preserved under products and under quotients. We present self-contained proofs of these two facts. 

For the next result consider automorphisms $\alpha_i\colon R_i\to R_i$ of the rings $R_i$, $i=1,\dots, n$. 
The product automorphism $\bigtimes_{i=1}^n\alpha_i\colon\bigtimes_{i=1}^nR_i\to \bigtimes_{i=1}^nR_i$ 
is defined by
\[
\left(\bigtimes_{i=1}^n\alpha_i\right)(r_1,\dots,r_n)=(\alpha_1(r_1),\dots,\alpha_n(r_n)).
\]
\begin{rmk}
	\label{rmkIdealProd}
	For every ideal $I\subset R_1\times\dots\times R_n$ there are ideals $I_i\subset R_i$ such that 
	$I=I_1\times\dots\times I_n$.
	See \cite[Exercise 20, p. 135]{H}.
	%It is enough to remark that ideals in $R$ are product of ideals in $R_i$. Indeed, if $I$ is an ideal in $R$, let $$I_i=\{a\in R_i\mid \exists x_j, \forall j\neq i: (x_1,x_2,\cdots,x_{i-1},a,x_{i+1},\cdots x_k)\in I\}.$$ On one hand, it is clear that $I=I_1\times I_2\times \cdots \times I_k$. On the other hand, observe that multiplying by $(0,0,\cdots 0,1,0\cdots,0)$, we get that $(0,0,\cdots,0,a,0\cdots,0)\in I$ if $a\in I_i$ and from this fact it is easy to deduce that $I_i$ is an ideal.
\end{rmk}

\begin{prop}
	\label{propProdExpAnillos}
	The product automorphism $\alpha_1\times\dots\times\alpha_n$ is expansive if and only if each $\alpha_i$ is expansive.
\end{prop}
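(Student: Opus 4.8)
The plan is to prove the two implications separately, using Remark~\ref{rmkIdealProd} throughout to write every ideal of $R=R_1\times\dots\times R_n$ as a product $I=I^{(1)}\times\dots\times I^{(n)}$ with $I^{(j)}$ an ideal of $R_j$; the case $n=1$ is trivial, so assume $n\ge 2$. A preliminary observation in both directions is that, since $\sum_{J\in\J}J=\bigl(\sum_J J^{(1)}\bigr)\times\dots\times\bigl(\sum_J J^{(n)}\bigr)$, a finite set $\J$ of ideals of $R$ is a generator if and only if each projection $\J_j:=\{J^{(j)}\mid J\in\J\}$ is a generator of $R_j$. The crucial design choice for the forward direction is which generator of $R$ to build from generators $\I_j$ of $R_j$: I would use the \emph{coordinate} generator $\I:=\{\,0\times\dots\times 0\times I\times 0\times\dots\times 0 \;:\; 1\le j\le n,\ I\in\I_j\,\}$, with $I$ in the $j$-th slot. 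It is indeed a generator, because the $m$-th coordinate of $\sum_{K\in\I}K$ receives exactly $\sum_{I\in\I_m}I=R_m$.

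For ($\Leftarrow$): let $\I_j$ be an $\alpha_j$-generator of expansivity and $\I$ the associated coordinate generator. Since each $\alpha_m$ fixes $0$, the automorphism $\alpha^{-i}=\alpha_1^{-i}\times\dots\times\alpha_n^{-i}$ sends a coordinate ideal supported in slot $j$ to the coordinate ideal $0\times\dots\times\alpha_j^{-i}(I)\times\dots\times 0$. The key computation is that the product of two coordinate ideals supported in \emph{distinct} slots equals $(0)$; hence an element of $\Pi_{|i|\le N}\alpha^{-i}(\I)$ is either $(0)$ (when the chosen factors are not all supported in a single slot) or a coordinate ideal $0\times\dots\times K\times\dots\times 0$ with $K\in\Pi_{|i|\le N}\alpha_j^{-i}(\I_j)$ for some $j$. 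Now, given a generator $\J$ of $R$, choose $N$ with $\Pi_{|i|\le N}\alpha_j^{-i}(\I_j)\prec\J_j$ for every $j$ (possible since enlarging the range of the product only refines, by Proposition~\ref{propGenBasic}(1) together with $IJ\subseteq J$). For each such $K$ there is $J\in\J$ with $K\subseteq J^{(j)}$, so $0\times\dots\times K\times\dots\times 0\subseteq J$; and $(0)$ is contained in any $J$. Thus $\Pi_{|i|\le N}\alpha^{-i}(\I)\prec\J$, so $\alpha$ is expansive with generator $\I$.

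For ($\Rightarrow$): suppose $\alpha$ is expansive with generator $\I$, and fix $j$, say $j=1$. Put $\I_1:=\{I^{(1)}\mid I\in\I\}$, a generator of $R_1$. Given an arbitrary generator $\J_1$ of $R_1$, form the generator $\J:=\{\,J_1\times R_2\times\dots\times R_n \;:\; J_1\in\J_1\,\}$ of $R$ and take $N$ with $\Pi_{|i|\le N}\alpha^{-i}(\I)\prec\J$. Since $\Pi_{|i|\le N}\alpha^{-i}(I_i)=\bigl(\Pi_{|i|\le N}\alpha_1^{-i}(I_i^{(1)})\bigr)\times\dots\times\bigl(\Pi_{|i|\le N}\alpha_n^{-i}(I_i^{(n)})\bigr)$ must be contained in some $J_1\times R_2\times\dots\times R_n$, this says precisely that $\Pi_{|i|\le N}\alpha_1^{-i}(I_i^{(1)})\subseteq J_1$; and since for any prescribed family $A_i\in\I_1$ one may choose $I_i\in\I$ with $I_i^{(1)}=A_i$, this means $\Pi_{|i|\le N}\alpha_1^{-i}(\I_1)\prec\J_1$. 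Hence $\I_1$ is an $\alpha_1$-generator of expansivity, and the same argument gives expansivity of every $\alpha_j$.

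I expect the main obstacle to be the forward direction, specifically recognizing that the right generator of the product ring is the coordinate generator: the more obvious candidates — the product generator $\{I_1\times\dots\times I_n\}$ and the ``block'' generator (one slot shrunk, the rest full) — are not expansive in general, since, for instance, neither is refined by iterates of $\{(2)\times(3),\,(3)\times(2)\}$ in $\Z\times\Z$. Once the coordinate generator is chosen, the vanishing of cross-slot products of coordinate ideals decouples the problem into the factors $R_j$ essentially for free. The statement for positive expansivity follows by the identical argument, replacing $\Pi_{|i|\le N}$ by $\Pi_{i=0}^{N}$ everywhere.
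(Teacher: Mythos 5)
Your proof is correct, and in the forward direction it takes a genuinely different---and in fact sounder---route than the paper. The paper's proof of the ``if'' direction uses the product generator $\I=\{I_1\times I_2\mid I_1\in\I_1,\ I_2\in\I_2\}$ and asserts that $\Pi_{|i|\le N}(\alpha_1\times\alpha_2)^{-i}(\I)\prec\J$ once the two projected refinements hold; but, as you suspected, that generator is not an expansivity generator in general: for $R_1=R_2=k$ a field, $\alpha_1=\alpha_2=id$ and $\I_1=\I_2=\{k\}$, every power of $\I=\{k\times k\}$ equals $\{k\times k\}$, which never refines the generator $\J=\{k\times 0,\,0\times k\}$, even though its projections do refine $\J_1=\{k,0\}$ and $\J_2=\{0,k\}$---the point being that the two coordinate refinements need not be realized by the same element of $\J$. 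Your coordinate generator avoids this precisely because cross-slot products vanish, so every nonzero ideal in the iterated product is supported in a single slot and its zero entries let it be absorbed into a single $J\in\J$; this is the same generator the paper itself uses in the proof of Theorem \ref{teoChar0Exp}, and your argument can be read as a repair of the gap in the paper's proof of this proposition. In the ``only if'' direction your argument (project $\I$ to one factor and test against generators of the form $\{J_1\times R_2\times\dots\times R_n\}$) coincides with the paper's. One small remark: your illustrative example in $\Z\times\Z$ is slightly off target, since $\Z$ admits no expansive automorphism at all, so no generator there could be expansive in any case; the field example above isolates the failure of the product generator more cleanly.
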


\begin{proof}
	Arguing by induction, it is enough to consider $n=2$. 
	Suppose that $\I_1, \I_2$ are generators of expansivity for $\alpha_1$ and $\alpha_2$ respectively. 
	Consider the following generator of $R_1\times R_2$
	\[
	\I=\{I_1\times I_2\mid I_1\in \I_1,I_2\in\I_2\}.
	\]
	Take any other generator $\J$ of $R_1\times R_2$ and consider the sets of ideals
	$$
	\begin{array}{c}
	\J_1=\{J_1\mid \mbox{ there is }J_2 \mbox{ ideal in } R_2 \mbox{ such that } J_1 \times J_2\in J\},\\
	\J_2=\{J_2\mid \mbox{ there is }J_1 \mbox{ ideal in } R_1 \mbox{ such that } J_1 \times J_2\in J\}.
	\end{array}
	$$
	Observe that $\J_1,\J_2$ are generators of $R_1,R_2$ respectively. Take $N$ such that $\Pi_{|i|\leq N}\alpha_1^{-i}(\I_1)\prec\J_1$ and 
	$\Pi_{|i|\leq N}\alpha_2^{-i}(\I_2)\prec\J_2$.
	It is easy to check that $\Pi_{|i|\leq N}(\alpha_1\times\alpha_2)^{-i}(\I)\prec\J$. 
	This proves that $\alpha_1\times\alpha_2$ is expansive.
	
	To prove the converse, suppose that $\I$ is an expansive generator for $\alpha_1\times\alpha_2$ 
	and consider the family of ideals
	$$\I_1=\{I_1\mid \mbox{ there is }I_2 \mbox{ ideal in } R_2 \mbox{ such that } I_1 \times I_2\in J\}.$$
	To prove that $\I_1$ is an expansive generator for $\alpha_1$ consider a generator $\J_1$ of $R_1$. 
	For the generator $\J=\{J\times R_2\mid J\in \J_1\}$ there is 
	$N$ such that $\Pi_{|i|\leq N}(\alpha_1\times\alpha_2)^i(\I)\prec\J$. 
	This implies that $\Pi_{|i|\leq N}\alpha_1^i(\I_1)\prec\J_1$ and $\alpha_1$ is expansive.
\end{proof}

\begin{prop}\label{pcociente}
	Let $R$ be a ring and $J\subseteq R$ an ideal. Call $R'$ the quotient ring $\frac{R}{J}$. For $\alpha\colon R\to R$ 
	an automorphism such that $\alpha(J)=J$, call $\alpha'\colon R'\to R'$ the induced automorphism. 
	If $\alpha$ is (positive) expansive then $\alpha'$ is (positive) expansive.
\end{prop}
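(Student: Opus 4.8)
The plan is to transport an expansivity generator of $\alpha$ to one of $\alpha'$ through the canonical surjection $\pi\colon R\to R'=R/J$. For an ideal $I\subseteq R$ write $\pi(I)=(I+J)/J$, and for a family of ideals $\I$ of $R$ write $\pi(\I)=\{\pi(I)\mid I\in\I\}$.

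First I would record the elementary compatibilities of $\pi$ with the ingredients appearing in the definition of expansivity.
\begin{enumerate}
\item If $\I$ is a generator of $R$, then $\pi(\I)$ is a generator of $R'$: since $\pi$ is a surjective unital ring homomorphism, $\sum_{I\in\I}\pi(I)=\pi\bigl(\sum_{I\in\I}I\bigr)=\pi(R)=R'$.
\item For ideals $I,I'$ of $R$ one has $\pi(II')=\pi(I)\pi(I')$, because $\pi$ carries the generating products $xy$ ($x\in I$, $y\in I'$) onto the generating products $\pi(x)\pi(y)$ of $\pi(I)\pi(I')$.
\item Since $\alpha(J)=J$ and hence $\alpha^{-1}(J)=J$, the automorphism $\alpha'$ and its inverse are induced, respectively, by $\alpha$ and $\alpha^{-1}$; therefore $(\alpha')^{-i}(\pi(I))=\pi(\alpha^{-i}(I))$ for every $i\in\Z$ and every ideal $I\subseteq R$.
\item If $\A\prec\B$ are families of ideals of $R$, then $\pi(\A)\prec\pi(\B)$, since $A\subseteq B$ implies $\pi(A)\subseteq\pi(B)$.
\end{enumerate}
Combining (2) and (3), $\pi$ carries the family $\Pi_{|i|\leq N}\alpha^{-i}(\I)$ onto the family $\Pi_{|i|\leq N}(\alpha')^{-i}(\pi(\I))$.

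Next I would show that generators of $R'$ lift to generators of $R$: given a generator $\J'$ of $R'$, put $\J=\{\pi^{-1}(J')\mid J'\in\J'\}$. Each $\pi^{-1}(J')$ is an ideal of $R$ containing $\ker\pi=J$ and satisfying $\pi(\pi^{-1}(J'))=J'$. Then $\sum_{J'\in\J'}\pi^{-1}(J')$ is an ideal of $R$ that contains $J$ and maps onto $\sum_{J'\in\J'}J'=R'$; by the correspondence between ideals of $R$ containing $\ker\pi$ and ideals of $R'$, such an ideal must be $R$. Hence $\J$ is a generator of $R$, and $\pi(\J)=\J'$ by construction.

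With these preliminaries the conclusion is immediate. Let $\I$ be a generator of $\alpha$-expansivity; I claim $\pi(\I)$ is a generator of $\alpha'$-expansivity. Given a generator $\J'$ of $R'$, lift it to a generator $\J$ of $R$ as above and use expansivity of $\alpha$ to obtain $N\geq 0$ with $\Pi_{|i|\leq N}\alpha^{-i}(\I)\prec\J$. Applying $\pi$ and invoking the compatibilities above,
\[
\Pi_{|i|\leq N}(\alpha')^{-i}(\pi(\I))=\pi\Bigl(\Pi_{|i|\leq N}\alpha^{-i}(\I)\Bigr)\prec\pi(\J)=\J',
\]
which is precisely the expansivity condition for $\alpha'$ with the generator $\pi(\I)$. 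For the positively expansive case the same argument works verbatim, replacing every occurrence of the index range $|i|\leq N$ by $0\leq i\leq N$. The whole proof is bookkeeping; the one step that needs a little care is the lifting of generators, where one must use $\ker\pi=J$ to be sure that the preimages of the members of $\J'$ still add up to all of $R$ rather than merely to some proper ideal whose image happens to be $R'$ — everything else is the routine fact that the canonical projection commutes with sums and products of ideals and intertwines $\alpha$ with $\alpha'$.
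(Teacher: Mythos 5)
Your proposal is correct and follows essentially the same route as the paper: push the expansivity generator $\I$ forward to $\pi(\I)$, lift an arbitrary generator of $R'$ to $R$ via $\pi^{-1}$, apply expansivity of $\alpha$, and push the refinement relation back down using that $\pi$ intertwines $\alpha$ with $\alpha'$ and commutes with products of ideals. The paper states these compatibilities as "clear," while you verify them explicitly (including the correspondence-theorem argument that the lifted family generates), but the underlying argument is the same.
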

\begin{proof}
	Let $\pi\colon R\to R'$ be the quotient map.
	We will show that if 
	$\I$ is an $\alpha$-generator of expansivity then $\I'=\{\pi(I)\mid I\in \I\}$ is an $\alpha'$-generator of expansivity.
	It is clear that $\I'$ is a generator of $R'$
	and that any generator of $R'$ is obtained in this way. 
	Given a generator $\K'$ of $R'$ consider $\K=\{\pi^{-1}(I')\mid I'\in\K'\}$.
	Take $N$ such that 
	$\Pi_{|i|\leq N}\alpha^{-i}(\I)\prec \K$.
	This 
	implies that
	$\Pi_{|i|\leq N}\alpha'^{-i}(\I')\prec \K'.$
	For positive expansivity the proof is similar.
\end{proof}
% \begin{proof}
% From our previous remark, we only have to prove that
% (1) implies (3). Let $\U$ be an expansivity cover. 
% If the cardinality of $\U$ is $k$, it is clear that $\V=\wedge_{i=1}^k\U$ satisfies 
% $\V^2=\V$. Also, $\V$ is an expansivity cover. 
% Therefore, $\I_{\V}$ is an idempotent expansive generator.
% \end{proof}

%***\azul{Acá se podrían juntar todas las interpretaciones topológicas de los resultados de \S \ref{secExpAuto}.}
%
%Proposition \ref{propProdExpAnillos}, in the topological framework, corresponds to: the disjoint union of expansive 
%homeomorphisms is expansive.
%
% Proposition \ref{pcociente} extends the known result of expansive homeomorphisms: the restriction of an expansive homeomorphism 
% to a compact invariant subset is expansive. 

\section{Strong forms of expansivity}
\label{secStrongExp}
In this section we present the main results of this paper. For the proofs of Theorems \ref{teoChar0Exp} and \ref{teoPosExpFinMax} 
we introduce some definitions and a lemma.

The \emph{radical} of an ideal $I$ is the set 
$\rad{I}=\{x\in R\mid x^n\in I \mbox{ for some integer } n\}$. 
Given a set of ideals $\I$ define $\rad{\I}=\{\rad{I}\mid I\in\I\}$.

\begin{lem}
\label{lemGenAcotFinMax}
Suppose that there is $r\in\N$ such that for every generator $\K$ 
there is a generator $\J$ such that $\card(\J)\leq r$ and $\J^N\prec\K$ for some $N\geq 1$.
Then $R$ has at most $r$ maximal ideals.
\end{lem}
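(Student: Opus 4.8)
The plan is to argue by contradiction. Assume $R$ has at least $r+1$ distinct maximal ideals $\mathfrak{m}_1,\dots,\mathfrak{m}_{r+1}$, and look for a single generator $\mathcal{K}$ that no generator of cardinality $\le r$ can refine after taking powers. The naive candidate $\{\mathfrak{m}_1,\dots,\mathfrak{m}_{r+1}\}$ is not good enough (a product of fewer than $r+1$ maximal ideals may already refine it), so instead I would take
$$
K_j=\bigcap_{i\neq j}\mathfrak{m}_i\qquad(1\le j\le r+1),\qquad \mathcal{K}=\{K_1,\dots,K_{r+1}\}.
$$

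First I would check that $\mathcal{K}$ is a generator. Distinct maximal ideals are pairwise comaximal, and a finite intersection (equivalently, product) of ideals each comaximal with $\mathfrak{m}_j$ is again comaximal with $\mathfrak{m}_j$; hence $K_j+\mathfrak{m}_j=R$, and in particular $K_j\not\subseteq\mathfrak{m}_j$. If some maximal ideal $\mathfrak{n}$ contained all the $K_j$, then $K_1\subseteq\mathfrak{n}$ together with primeness of $\mathfrak{n}$ forces $\mathfrak{m}_i\subseteq\mathfrak{n}$ for some $i\neq 1$, i.e.\ $\mathfrak{n}=\mathfrak{m}_i$; but then $K_i\subseteq\mathfrak{n}=\mathfrak{m}_i$ contradicts $K_i\not\subseteq\mathfrak{m}_i$. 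So no maximal ideal contains $\sum_j K_j$, whence $\sum_j K_j=R$ (alternatively, invoke the Chinese Remainder Theorem for $R\to\prod_j R/\mathfrak{m}_j$). The feature I will use repeatedly is: for $i\in\{1,\dots,r+1\}$ one has $K_j\subseteq\mathfrak{m}_i$ if and only if $i\neq j$.

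Next I would apply the hypothesis to this $\mathcal{K}$: there is a generator $\mathcal{J}=\{J_1,\dots,J_s\}$ with $s\le r$ and $\mathcal{J}^N\prec\mathcal{K}$ for some $N\ge 1$. For each $t$ the ideal $J_t^N=J_t\cdots J_t$ lies in $\mathcal{J}^N$, so $J_t^N\subseteq K_{\phi(t)}$ for some $\phi(t)\in\{1,\dots,r+1\}$. Since $K_{\phi(t)}$ is an intersection of maximal ideals, it is radical, so $J_t\subseteq\sqrt{J_t^N}\subseteq K_{\phi(t)}$; concretely, if $x\in J_t$ then $x^N\in\mathfrak{m}_i$ for every $i\neq\phi(t)$, hence $x\in\mathfrak{m}_i$ by primeness, so $J_t\subseteq\bigcap_{i\neq\phi(t)}\mathfrak{m}_i=K_{\phi(t)}$.

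Finally comes the pigeonhole punchline: the image of $\phi$ has at most $s\le r$ elements, so there exists $j_0\in\{1,\dots,r+1\}$ not in it. Then $\phi(t)\neq j_0$ for every $t$, hence $J_t\subseteq K_{\phi(t)}\subseteq\mathfrak{m}_{j_0}$ for every $t$, and therefore $R=\sum_t J_t\subseteq\mathfrak{m}_{j_0}\subsetneq R$, a contradiction. Thus $R$ has at most $r$ maximal ideals. The one genuinely delicate point — and the reason the $K_j$'s are needed rather than the $\mathfrak{m}_i$'s themselves — is engineering $\mathcal{K}$ so that refining $\mathcal{J}^N$ into it forces each $J_t$ into all but one of the $\mathfrak{m}_i$'s; once that is set up, the rest is routine commutative algebra (comaximality, primeness, radicals) plus counting.
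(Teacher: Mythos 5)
Your proof is correct and follows essentially the same route as the paper: the same generator $\mathcal{K}$ built from all-but-one of the $r+1$ maximal ideals, the same passage from $J^N\subseteq K$ to $J\subseteq K$ via radicals, and a cardinality count to reach the contradiction. The only (harmless) difference is cosmetic: by taking the $K_j$ as intersections you make them radical outright and finish with a direct pigeonhole forcing all of $\mathcal{J}$ into a single $\mathfrak{m}_{j_0}$, whereas the paper concludes that a proper subset of $\mathcal{K}$ would generate, citing a fact about radicals from Lang.
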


\begin{proof}
Arguing by contradiction, suppose that there are $r+1$ different maximal ideals 
$\m_1, \m_2,\dots, \m_{r+1}$. Define $K_i=\Pi_{j\neq i} \m_i$. 
By induction in $r$, we can prove that 
$\sum_{j\neq i}K_i=\m_i$,
which implies that $\K=\{K_1,\cdots,K_r, K_{r+1}\}$ is a generator. 
Note that no proper subset of $\K$ generates. 

Take $N\geq 1$ and a generator $\J$ of cardinal $r$ such that $\J^N\prec\K $. 
Since $\J^N\prec\K$, for each $J\in \J$ there is $K_J\in\K$ such that $J^N\subseteq K_J$.
This implies that $\rad{J}=\rad{J^N}\subseteq \rad{K_J}$, and $\rad{\J}\prec\rad{\K}$.
Since the cardinality of $\rad{\J}$ is at most $r$, 
there is a proper subset $\K'$ of $\K$ such that $\rad{\K'}$ generates. 
By \cite[Proposition 5.1 (ii) and (iii)]{L} we have that $\K'$ generates.
This contradiction proves the result.
\end{proof}

\subsection{Minimal generators}

We say that a generator $\I$ is $\prec$-\emph{minimal} if $\I\prec\J$ for every generator $\J$. In terms of expansivity, the existence of a $\prec$-minimal generator of $R$ can be seen as a 0-\emph{expansivity} of any automorphism of $R$. Indeed, if $\I$ is a minimal generator of $R$, then
$\Pi_{i=0}\alpha^i(\I)\prec\J$ for every generator $\J$ (and every automorphism $\alpha$).
In particular, the existence of a $\prec$-minimal generator gives the positive expansivity of every automorphism.

By Proposition \ref{propGenBasic}, if $R$ is local then $\I=\{R\}$ is a $\prec$-minimal generator. 
Also, a finite product of local rings has a $\prec$-minimal generator. The purpose of this section 
is to prove this statement and its converse. 

We say that a generator is \emph{strong minimal} 
if it is $\prec$-minimal and no proper subset generates. 
It is clear that every $\prec$-minimal generator contains a strong minimal generator 
and that a strong minimal generator is unique. 

\begin{teo}
\label{teoChar0Exp}
A ring $R$ admits a $\prec$-minimal generator if and only if it is a finite product of local rings.
In this case, there are exactly $k$ maximal ideals in $R$ and the strong minimal generator $\I=\{I_1,\cdots,I_k\}$ is such that:
\begin{enumerate}
\item each ideal in $\I$ is idempotent and principal,
\item $\I$ is orthogonal, that is, $I_iI_j=0$ for $i\neq j$.
\end{enumerate}
\end{teo}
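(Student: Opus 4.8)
The plan is to prove the two implications separately, extracting the structural claims (1)--(2) and the count of maximal ideals along the way.

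\emph{Product of local rings $\Rightarrow$ $\prec$-minimal generator.} Suppose $R=R_1\times\cdots\times R_k$ with each $R_j$ local. Let $I_j\subset R$ be the ideal equal to $R_j$ in the $j$-th coordinate and $0$ elsewhere, and set $\I=\{I_1,\dots,I_k\}$, which is a generator since $\sum_j I_j=R$. For any generator $\J$, Remark \ref{rmkIdealProd} writes each $J\in\J$ as $J_1\times\cdots\times J_k$, and reading $\sum_{J\in\J}J=R$ coordinatewise shows $\{J_j\mid J\in\J\}$ generates $R_j$; locality and Proposition \ref{propGenBasic}(4) then give some $J\in\J$ with $J_j=R_j$, so $I_j\subseteq J$ and thus $\I\prec\J$. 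Here each $I_j$ is idempotent, principal (generated by the idempotent $1_{R_j}$ placed in slot $j$) and pairwise orthogonal, no proper subset of $\I$ generates, and $R$ has exactly $k$ maximal ideals, one per factor; so this direction already exhibits (1)--(2).

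\emph{$\prec$-minimal generator $\Rightarrow$ structure: combinatorial part.} Assume $R\neq0$ has a $\prec$-minimal generator; then (as noted before the theorem) it has a unique strong minimal generator $\I=\{I_1,\dots,I_k\}$ with all $I_j\neq0$. First, the $I_j$ are pairwise incomparable, since $I_a\subseteq I_b$ with $a\neq b$ would let $\I\setminus\{I_a\}$ generate. Next, each $I_j$ is idempotent: $\I^2$ is a generator by Proposition \ref{propGenBasic}(2), so $\I\prec\I^2$ gives $I_j\subseteq I_aI_b\subseteq I_a\cap I_b$ for some $a,b$, and incomparability forces $a=b=j$, whence $I_j\subseteq I_j^2\subseteq I_j$. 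By Lemma \ref{lemGenAcotFinMax} (with $r=k$ and $\J=\I$, using $\I\prec\K$ for every generator $\K$) $R$ has at most $k$ maximal ideals. For a maximal ideal $\m$, exactly one $I_j$ escapes $\m$: at least one since $\sum I_j=R$; and if $I_a,I_b\not\subseteq\m$ with $a\neq b$ then $\{\m,I_a\}$ is a generator, so $\I\prec\{\m,I_a\}$ together with incomparability forces $I_b\subseteq\m$, a contradiction. Let $\phi(\m)$ be that index. If $\phi$ missed some index $j$, then $I_j$ would lie in every maximal ideal and $\sum_{i\neq j}I_i$ in none, so $\sum_{i\neq j}I_i=R$, against strong minimality. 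Hence $\phi$ is onto, and with ``at most $k$'' we conclude $R$ has exactly $k$ maximal ideals $\m_1,\dots,\m_k$, which I index so that $I_i\subseteq\m_j\iff i\neq j$.

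\emph{$\prec$-minimal generator $\Rightarrow$ structure: idempotent part.} Since $I_a+\m_a=R$, choose $e_a\in I_a$ with $e_a\equiv1\pmod{\m_a}$; note $e_a\in I_a\subseteq\m_b$ for $b\neq a$, so $e:=\sum_a e_a\equiv1\pmod{\m_a}$ for every $a$ and $e$ is a unit. Put $f_a=e^{-1}e_a\in I_a$; then $\sum_a f_a=1$, so $\{(f_1),\dots,(f_k)\}$ is a generator, and minimality gives $I_i\subseteq(f_a)\subseteq I_a$ for some $a$, forcing $a=i$ and $I_i=(f_i)$ principal. A principal idempotent ideal is generated by an idempotent ($f_i=f_i^2g\Rightarrow\epsilon_i:=f_ig$ is idempotent with $(\epsilon_i)=(f_i)$), so $I_i=(\epsilon_i)$ with $\epsilon_i^2=\epsilon_i$. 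For $i\neq j$, $\epsilon_i\epsilon_j$ is idempotent and lies in every maximal ideal (if $\m\neq\m_i$ then $\epsilon_i\in I_i\subseteq\m$; if $\m=\m_i$ then $\epsilon_j\in I_j\subseteq\m$ as $j\neq i$), hence in the Jacobson radical, so $\epsilon_i\epsilon_j=0$ and $I_iI_j=(\epsilon_i\epsilon_j)=0$, which is (2). Then $\epsilon:=\sum_a\epsilon_a$ satisfies $\epsilon\epsilon_b=\epsilon_b$ for all $b$, and writing $1=\sum_a r_a\epsilon_a$ (possible since $\sum_a(\epsilon_a)=R$) gives $\epsilon=\epsilon\sum_a r_a\epsilon_a=\sum_a r_a(\epsilon\epsilon_a)=\sum_a r_a\epsilon_a=1$. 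Thus $\epsilon_1,\dots,\epsilon_k$ is a complete orthogonal system of idempotents, $R\cong\prod_{a=1}^k R\epsilon_a=\prod_{a=1}^k I_a$ with $I_a$ a ring of identity $\epsilon_a$; since the maximal ideals of this product are exactly those of its factors, there are $k$ of them, and each factor is nonzero, each $I_a$ has a unique maximal ideal, i.e.\ is local.

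\emph{Main obstacle.} The delicate point is upgrading ``each $I_j$ is an idempotent ideal'' to an honest product of local rings. Without a Noetherian hypothesis an idempotent ideal need not be generated by an idempotent --- $\Z_{2,3}$ (Proposition \ref{propQ23basic}) has only the idempotent ideals $(0)$ and $R$ yet is not such a product --- so idempotency of the ideals is not enough by itself. The resolution is to invoke $\prec$-minimality a \emph{second} time, against the generator $\{(f_a)\}$ manufactured from a unit, to force the $I_j$ to be \emph{principal}; idempotency then turns their generators into genuine idempotents, and orthogonality, completeness and hence the decomposition follow from the elementary fact that an idempotent in the Jacobson radical is $0$. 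Incomparability, idempotency of the ideals, and the exact count of maximal ideals are comparatively routine once minimality and Lemma \ref{lemGenAcotFinMax} are in hand.
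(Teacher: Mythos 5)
Your proposal is correct and follows essentially the same route as the paper's proof: strong minimality plus Lemma \ref{lemGenAcotFinMax} gives exactly $k$ maximal ideals, $\I\prec\I^2$ gives idempotency, comparison with the principal generator coming from a decomposition of $1$ gives principality, and the resulting complete orthogonal idempotents yield $R\cong\prod R e_i$ with each factor local by counting maximal ideals. The only local deviations are that you normalize the elements $e_a$ through the unit $e$ (the paper simply writes $1=\sum e_i$ with $e_i\in I_i$, which already suffices) and you derive orthogonality from the fact that an idempotent in the Jacobson radical vanishes, whereas the paper gets it by noting that $\{I_i,\ann(I_i)\}$ is a generator and invoking strong minimality; both variants are valid and of comparable difficulty.
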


\begin{proof}
We start assuming that $R$ is the product of the local rings $R_1,\dots,R_n$. 
Consider the generator of $R$ given by
	$$
	\I=\{(0,\cdots,0,R_i,0,\cdots,0)\mid i\in\{1,\dots,n\}\}.
	$$
By Proposition \ref{propGenBasic} and Remark \ref{rmkIdealProd} it is easy to check that $\I$ is a strong $\prec$-minimal generator of $R$.

To prove the direct part assume that 
$\I$ is a strong minimal generator. 
We start showing that for each $r=1,\dots,k$, there is a maximal ideal ${\mathfrak m}_r$ such that $I_r\not\subseteq {\mathfrak m}_r$ and $I_l\subseteq {\mathfrak m}_r, \forall l \neq r$. 
For $r=1,\dots,k$ consider the ideal
$\hat I_r=\sum_{i\neq r} I_i$. 
Since $\I$ has minimal cardinality we have that $\hat I_r\neq R$. 
Let ${\mathfrak m}_r$ be a maximal ideal containing $\hat I_r$. 
Since $\I$ is a generator, we have that $I_r\not\subseteq {\mathfrak m}_r$.
It is clear that ${\mathfrak m}_i\neq {\mathfrak m}_j$ when $i\neq j$. 
By Lemma \ref{lemGenAcotFinMax}, we conclude that there are no more maximal ideals.
%Moreover, suppose there is a maximal ideal ${\mathfrak m}$ different from each ${\mathfrak m}_r$. Take $r$ such that $I_r\not \subseteq {\mathfrak m}$. Then $I_r$ is not included in any of the ideals of the generator $\{{\mathfrak m},{\mathfrak m}_r\}$, contradicting the $\prec$-minimality of $\I$. 

By Proposition \ref{propGenBasic} we know that 
$\I^2$ is a generator. 
We will prove that $I_r$ is idempotent.
As $\I\prec \I^2$, we know that $I_r$ is included in some ideal of $\I^2$. 
Suppose $I_r\subseteq I_jI_k$. 
If $j\neq r$ we have $I_r\subseteq I_j$ contradicting the minimality of the cardinality of $\I$ 
(since $\J=\{I_i\mid i\neq r\}$ would be a minimal generator included in $\I$).
Then $j=r$ and similarly $k=r$. This proves that $I_r\subseteq I_r^2$.

Take $e_r\in I_r$, $r=1,\dots,k$, such that 
$1=\sum_{i=1}^ke_i$ and define $J_i=Re_i$. 
Then $\J=\{J_1,\dots,J_k\}$ is a generator and therefore $\I\prec \J$. 
For each $I_r \in I$ there is some $l$ such that $I_r\subseteq Re_l\subseteq I_l$. 
If $l\neq r$, it would contradict the strong minimality of $\I$. 
Then, $l=r$ and $I_r\subseteq Re_r\subseteq I_r$, so $I_r=Re_r$ and $I_r$ is principal.

To prove the orthogonality, for an ideal $I$ consider its annihilator 
ideal $$\ann(I)=\{a\in R\mid ax=0\ \forall x\in I\}.$$ 
As $e_i\in (Re_i)^2$ it is easy to deduce that for each $i=1,\cdots,k$ there is some $r_i\in R$ such that $e_i=r_ie_i^2$. This implies $(1-r_ie_i)\in \ann(I_i)$ and, as $r_ie_i \in I_i$ we deduce that the set $\{I_i, \ann(I_i)\}$ is a generator. 
Take $j\neq i$. 
As $\I\prec \{I_i, \ann(I_i)\}$ and $I_j\not \subseteq I_i$ (this would contradict the strong minimality of $\I$) we deduce $I_j\subseteq \ann(I_i)$ and then $I_iI_j=0$.

To finish the proof of the converse note that each $e_i$ is idempotent, since
$0=e_i(1-\sum_je_j)=e_i-\sum_j e_ie_j=e_i-e_i^2.$ This implies that each $Re_i$ can be seen as a ring with unity $e_i$, and, using orthogonality, $R=Re_1\times Re_2\times \cdots \times Re_k$.
To show that each $Re_i$ is local take $\mathfrak m_i\subseteq Re_i$ to be a maximal ideal and 
observe that $Re_1\times \cdots Re_{i-1}\times \mathfrak m_i\times Re_{i+1}\times Re_k$ is a maximal ideal in $R$. 
This gives $k$ different maximal ideals in $R$.
Assume, without loss of generality, that $Re_1$ is not local. 
Then, there is a maximal ideal $\mathfrak m'_1\neq \mathfrak m_1$. 
It is easy to show that this would give a maximal ideal $\mathfrak m'_1\times Re_2\times \cdots\times Re_k$ different from 
the $k$ ideals we had, contradicting that there are $k$ maximal ideals.
\end{proof}

\begin{rmk}
\label{rmkQ23NoMinGen}\label{elejemplo}
A ring with finitely many maximal ideals may not admit a $\prec$-minimal generator. 
Indeed, by Proposition \ref{propQ23basic} 
the ring $\Z_{2,3}$ has finitely many maximal ideals but is has no $\prec$-minimal generator 
because for every generator $\I$ there is $n$ such that $\I\nprec \{(2^n),(3^n)\}$.
This gives an example of a ring for which the identity is positively expansive (see Example \ref{partedelejemplo}) but not $0$-expansive. 
\end{rmk}

\subsection{Positively expansive automorphisms}

We show in what follows that the existence of a positive expansive automorphism on a ring implies that the ring admits finitely many maximals. 
The proof is based on \cite{AAM,AdKoMc}, but the non idempotence of algebraic generators introduces some dificulties.

\begin{teo}
\label{teoPosExpFinMax}
A ring admitting a positively expansive automorphism has finitely many maximal ideals.
\end{teo}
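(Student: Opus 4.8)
The plan is to deduce this from Lemma \ref{lemGenAcotFinMax} by producing, from a positively expansive automorphism $\alpha$ with generator of expansivity $\I$, a uniform bound $r$ on the cardinality of ``efficient'' generators that refine a given generator under some power of the product operation. The natural candidate is $r=\card(\I)$, but the obstacle (already flagged by the authors: ``the non idempotence of algebraic generators introduces some dificulties'') is that positive expansivity only gives $\Pi_{i=0}^N\alpha^{-i}(\I)\prec\J$, and this product has cardinality $\card(\I)^{N+1}$, not $\card(\I)$. So the first task is to pass from this large product to a generator of bounded cardinality.

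First I would try to replace $\I$ by an idempotent generator, mimicking Remark \ref{rmkOpCovIdemp}: if $\I$ could be refined by an idempotent generator $\V$ with $\V\ll\I$, then $\V$ is again a generator of positive expansivity (refining down preserves the expansivity property, as in Proposition \ref{propPosExpHomeoMinCov}), and for idempotent $\V$ one has $\Pi_{i=0}^N\alpha^{-i}(\V)$ collapsing in a controlled way — but Proposition \ref{propQ23basic}(5) shows idempotent generators need not exist in the required form, so this route is blocked in general. The correct substitute, I expect, is to pass to radicals: the hypothesis in Lemma \ref{lemGenAcotFinMax} only needs $\J^N\prec\K$ for \emph{some} $N$ and some generator $\J$ of bounded cardinality, and radicals are stable under powers. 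So the strategy is: given an arbitrary generator $\K$, use positive expansivity to get $\Pi_{i=0}^N\alpha^{-i}(\I)\prec\K$ for some $N$; then I want to extract from $\Pi_{i=0}^N\alpha^{-i}(\I)$ a \emph{sub}-generator, or a generator built from at most $\card(\I)$ of the ideals $\alpha^{-i}(I)$ (with $I\in\I$), whose some power still refines $\K$.

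The key combinatorial step is this: among the $N+1$ generators $\alpha^{-i}(\I)$, $i=0,\dots,N$, I want one single $\J=\alpha^{-i_0}(\I)$ (cardinality $\card(\I)$) such that $\J^{N+1}\prec\K$, or at least $\J^M\prec\K$ for some $M$. This is exactly the algebraic analogue of Proposition \ref{lemPosExp} (``$h^{-n}(\U)\prec\V$ for some $n$''), whose proof in \cite{AAM} presumably shows that for positively expansive $h$ there is one iterate of the cover that already refines $\V$. The algebraic argument should run: each $\alpha^{-i}(\I)$ is a generator, the product $\Pi_{i=0}^N\alpha^{-i}(\I)$ refines $\K$, and since $IJ\subseteq I$ for ideals, every ideal of the product is contained in an ideal of each factor; combining this with a counting/pigeonhole argument over the finitely many ideals of $\K$ should pin down a single index $i_0$ (or a bounded-size subfamily) that works after raising to a power. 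I expect the main obstacle to be making this extraction rigorous without idempotence — specifically, showing that $\J^M\prec\K$ for some finite $M$ rather than merely $\J\prec$ (some iterate of $\K$). Once $\J$ with $\card(\J)=\card(\I)=:r$ and $\J^M\prec\K$ is produced for every $\K$, Lemma \ref{lemGenAcotFinMax} applies verbatim and yields at most $r$ maximal ideals.

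A fallback, if the single-iterate extraction fails, is to keep the whole product $\Pi_{i=0}^N\alpha^{-i}(\I)$ but apply the radical operation immediately: set $\J=\rad{\Pi_{i=0}^N\alpha^{-i}(\I)}$ and note $\rad{IJ}=\rad{I}\cap\rad{J}$, so after taking radicals the product of the $\alpha^{-i}(\I)$ becomes the intersection-generator $\wedge_{i=0}^N\rad{\alpha^{-i}(\I)}=\wedge_{i=0}^N\alpha^{-i}(\rad\I)$; then one is genuinely in the topological/idempotent situation and can invoke the argument behind Proposition \ref{lemPosExp} at the level of radicals, concluding that some $\alpha^{-i_0}(\rad\I)$ already refines $\rad\K$, hence has cardinality $\le\card(\I)$ and generates (using \cite[Proposition 5.1]{L} as in Lemma \ref{lemGenAcotFinMax}), so again $R$ has at most $\card(\I)$ maximal ideals.
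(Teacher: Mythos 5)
There is a genuine gap, and it sits exactly where you predicted it would: neither of your two routes actually proves the refinement statement that feeds Lemma \ref{lemGenAcotFinMax}. In your main route you need, for every generator $\K$, a single index $i_0$ and an exponent $M$ with $[\alpha^{-i_0}(\I)]^{M}\prec\K$, and you offer only a ``counting/pigeonhole'' sketch; but the inclusion you quote goes the wrong way: since $IJ\subseteq I$, the ideals of $\Pi_{i=0}^N\alpha^{-i}(\I)$ are \emph{smaller} than the factor ideals, so knowing that these small products sit inside ideals of $\K$ gives no information about any factor $\alpha^{-i}(\I)$ refining $\K$ — that is precisely the difficulty, not a step towards its solution. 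In your fallback you pass to radicals (correct observation: $\rad{IJ}=\rad{I}\cap\rad{J}$ turns products into idempotent intersections) but then you ``invoke the argument behind Proposition \ref{lemPosExp}'', which is proved in \cite{AAM}, not in this paper, and is not reproduced by you; moreover, even granting that argument, what it yields is that some iterate of a \emph{refined} bounded family (an intersection-cover built from $N+1$ iterates of $\rad{\I}$) eventually refines $\rad{\K}$, not that some $\alpha^{-i_0}(\rad{\I})$ itself does, so your concluding claim is stronger than what you could cite. In both routes the central engine is asserted rather than proved.

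The paper supplies that engine with a short bootstrap that your proposal misses: apply positive expansivity to the generator $\alpha(\I)$ to get $N$ with $\I_N:=\Pi_{i=0}^N\alpha^{-i}(\I)\prec\alpha(\I)$, i.e.\ $\alpha^{-1}(\I_N)\prec\I$, and then prove by induction that $\alpha^{-n}(\I_N^{2^n})\prec\I_{N+n}$ for all $n$ — the exponents $2^n$ are exactly the cost of non-idempotence, and they are harmless because Lemma \ref{lemGenAcotFinMax} allows an arbitrary power. Given any $\K$, choosing $n$ with $\I_{N+n}\prec\K$ gives $[\alpha^{-n}(\I_N)]^{2^n}\prec\K$ with $\card(\alpha^{-n}(\I_N))\leq\card(\I)^{N+1}$, a bound fixed once and for all, so the lemma applies (note the bound is $\card(\I)^{N+1}$, not $\card(\I)$ as you claim — irrelevant for finiteness, but your stated bound is unjustified). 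Your radical variant can be made to work by running this same induction on $\rad{\I_N}$, where idempotence removes the powers; but then you cannot apply Lemma \ref{lemGenAcotFinMax} verbatim, since $\rad{\J}\prec\rad{\K}$ does not yield $\J^{M}\prec\K$ without finite generation of the ideals involved — you would have to re-prove the lemma at the radical level (its proof indeed only uses $\rad{\J}\prec\rad{\K}$), a step you gesture at but do not carry out.
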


\begin{proof}
	Let $\I$ be a positively expansive generator of the ring and 
	define $\I_n=\Pi_{i=0}^n\alpha^{-i}(\I)$ for all $n\geq 1$.
	Since $\alpha$ is an automorphism, $\alpha(\I)$ is a generator.
	Thus, by definition, there is $N\in\N$ such that 
	 $\I_N\prec \alpha(\I)$. Applying $\alpha^{-1}$ we obtain
\begin{equation}
 \label{ecuRefinado}
 \alpha^{-1}(\I_N)\prec \I. 
\end{equation}	
Define $\J=\I_N$. We will show that 
\begin{equation}
 \label{ecuRefInduc}
 \alpha^{-n}(\J^{2^n})\prec \I_{N+n}\text{ for all }n\geq 0. 
\end{equation}
For $n=0$ it is trivial. 
% Applying $\alpha^{-1}$ to \eqref{ecuRefinado} we get
% 	\begin{equation}
% 	\label{ecuPrec1}
% 	\alpha^{-1}(\I_N)\prec\I.
% 	\end{equation}
% Since $\alpha^{-1}(\I_N)\prec\alpha^{-1}(\I_N)$ and 
% $\I_{N+1}=\alpha^{-1}(\I_N)\I$, 
% by Lemma \ref{prod} we conclude that 
% $\alpha^{-1}(\J^2)= [\alpha^{-1}(\I_N)]^2\prec \I_{N+1}$.
% This proves \eqref{ecuRefInduc} for $n=1$. 
Suppose that \eqref{ecuRefInduc} holds for some $n$. 
Applying $\alpha^{-1}$ to \eqref{ecuRefInduc} we get
\begin{equation}
 \label{ecuRefiUno}
 \alpha^{-n-1}(\J^{2^n})\prec \alpha^{-1}(\I_{N+n}).
\end{equation}
Since $\alpha^{-1}(\I_{N+n})\prec \alpha^{-1}(\I_N)$, by \eqref{ecuRefinado} and \eqref{ecuRefiUno} we have
\begin{equation}
 \label{ecuRefiDos}
 \alpha^{-n-1}(\J^{2^n})\prec\I.
\end{equation}
As $\I_{N+n+1}=\alpha^{-1}(\I_{N+n})\I$, 
applying Proposition \ref{propGenBasic} to \eqref{ecuRefiUno} and \eqref{ecuRefiDos}, we conclude
\[
 \alpha^{-n-1}(\J^{2^{n+1}})=[\alpha^{-n-1}(\J^{2^n})]^2\prec \I_{N+n+1}.
\]
We have proved \eqref{ecuRefInduc} by induction. 

Given any generator $\K$ if we take $n$ such that $\I_{N+n}\prec \K$ we conclude that 
$\alpha^{-n}(\J^{2^n})\prec \K$.
By Lemma \ref{lemGenAcotFinMax}
We conclude that there are finitely many maximal ideals.
\end{proof}

\begin{cor}
	If a compact metric space admits a positively expansive homeomorphism then it is finite.
\end{cor}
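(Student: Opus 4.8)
The plan is to deduce the corollary directly from Theorem~\ref{teoPosExpFinMax} together with Theorem~\ref{teoGenIdemp} and Proposition~\ref{propUtzT1}. First I would take a compact metric space $X$ admitting a positively expansive homeomorphism $h\colon X\to X$. By Theorem~\ref{teoGenIdemp} (the ``same is true for positive expansivity'' clause), the induced automorphism $\alpha_h\colon C(X)\to C(X)$ is a positively expansive automorphism of the commutative unital ring $C(X)$. Hence, by Theorem~\ref{teoPosExpFinMax}, $C(X)$ has finitely many maximal ideals.

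The next step is to translate ``finitely many maximal ideals of $C(X)$'' into ``$X$ is finite''. For this I would invoke Proposition~\ref{perp}, part~(4): the correspondence $x\mapsto \m_x$ is a bijection between $X$ and the set of maximal ideals of $C(X)$. Since the latter set is finite, $X$ itself is a finite set. This already closes the argument and makes no use of Proposition~\ref{propUtzT1}; alternatively, one could finish by noting that a finite metric space carries the discrete topology, so it is $T_1$, and then Proposition~\ref{propUtzT1} gives the same conclusion --- but the bijection of Proposition~\ref{perp}(4) is the cleanest route and avoids circularity.

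I do not anticipate a serious obstacle here: the corollary is essentially a repackaging of the ring-theoretic theorem back into the topological setting, and every ingredient (the equivalence of expansivity notions under $h\mapsto\alpha_h$, and the identification of $X$ with $\mathrm{specm}\,C(X)$) has already been established in the excerpt. The only point that requires a word of care is making sure the hypothesis of Theorem~\ref{teoGenIdemp} is met, namely that $X$ is a \emph{compact metric} space, which is exactly what the corollary assumes. So the proof is simply: apply Theorem~\ref{teoGenIdemp} to pass to $\alpha_h$, apply Theorem~\ref{teoPosExpFinMax} to bound the maximal ideals of $C(X)$, and apply Proposition~\ref{perp}(4) to conclude that $X$ is finite.
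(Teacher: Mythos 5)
Your argument is correct and is essentially the paper's own proof: the paper likewise passes to $\alpha_h$ (via Theorem \ref{teoGenIdemp}), applies Theorem \ref{teoPosExpFinMax}, and uses the bijection $x\mapsto\m_x$ of Proposition \ref{perp}(4) to conclude $X$ is finite. Your extra remark about Proposition \ref{propUtzT1} is unnecessary but harmless.
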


\begin{proof}
The maximal ideals in $C(X)$ are exactly the ideals of the form $\m_x$ for $x\in X$ (see Proposition \ref{perp} (4)). 
We deduce from Theorem \ref{teoPosExpFinMax} that $X$ has finitely many points.
\end{proof}

\subsection{Expansivity on principal ideal domains}
We recall that in a principal ideal domain, the maximal ideals are exactly the generated by irreducible elements, and these are exactly the prime elements of the ring. Moreover, every non invertible and non zero element admits a unique factorization as a product of irreducible elements. 

\begin{teo}\label{detopolandia}
If $R$ is a principal ideal domain then the following are equivalent: 
 \begin{enumerate}
 \item $R$ admits a positive expansive automorphism,
 \item the identity is expansive,
 \item $R$ has finitely many maximal ideals.
\end{enumerate}
\end{teo}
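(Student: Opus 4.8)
The plan is to prove the cycle of implications $(1)\Rightarrow(3)\Rightarrow(2)\Rightarrow(1)$, exploiting the special structure of a principal ideal domain (PID) to upgrade the generic results of \S\ref{secStrongExp}.

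\textbf{$(1)\Rightarrow(3)$.} This is immediate from Theorem \ref{teoPosExpFinMax}, which holds for any ring.

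\textbf{$(3)\Rightarrow(2)$.} Suppose $R$ has finitely many maximal ideals $(p_1),\dots,(p_k)$, where $p_1,\dots,p_k$ are the (finitely many, up to units) prime elements of $R$. First I would observe that in such a PID every nonzero ideal has the form $(p_1^{a_1}\cdots p_k^{a_k})$ for some exponents $a_i\geq 0$, by unique factorization, and that a finite set of ideals $\I$ is a generator precisely when $\sum_{I\in\I}I=R$, i.e.\ when for each $j$ there is some ideal in $\I$ not contained in $(p_j)$ --- equivalently, the gcd of the generators of the ideals in $\I$ is a unit. The natural candidate for an expansive generator of $\mathrm{id}$ is $\I=\{(p_1),(p_2),\dots,(p_k)\}$ (together with $(0)$ if one wants, though it is harmless). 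Since $\mathrm{id}^{-i}$ is trivial, one has $\Pi_{|i|\leq N}\mathrm{id}^{-i}(\I)=\I^{2N+1}$, whose elements are the ideals $(p_1^{e_1}\cdots p_k^{e_k})$ with $e_1+\dots+e_k=2N+1$ and each $e_j\geq 0$. Given an arbitrary generator $\J$, I need $N$ so that $\I^{2N+1}\prec\J$. Here is where finiteness of the maximal spectrum is used: if $\J$ contains $R$ the refinement is trivial, and otherwise write each $J\in\J$ as $(p_1^{b_1(J)}\cdots p_k^{b_k(J)})$; because $\J$ generates, for each $j$ there is $J_j\in\J$ with $b_j(J_j)=0$. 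Set $M=\max_{J\in\J,\,j}b_j(J)$ and take $N$ with $2N+1\geq kM$. Then for any tuple $(e_1,\dots,e_k)$ summing to $2N+1$, some coordinate $e_{j_0}$ satisfies $e_{j_0}\geq M$; choosing the ideal $J_{j_0}\in\J$ with vanishing $p_{j_0}$-exponent, one checks $(p_1^{e_1}\cdots p_k^{e_k})\subseteq J_{j_0}$ because the containing ideal has strictly larger exponent at every prime where $J_{j_0}$ is supported (it needs $e_j\geq b_j(J_{j_0})$ for all $j\neq j_0$; a uniform bound on $N$ together with a pigeonhole on which coordinate is large, iterated over the $k$ primes, gives this --- this is the step requiring the most care). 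So $\I$ is an expansive generator for $\mathrm{id}$.

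\textbf{$(2)\Rightarrow(1)$.} This is trivial: by the proposition preceding Example \ref{partedelejemplo} (``if $\mathrm{id}\colon R\to R$ is expansive then it is positively expansive''), the identity itself is then a positively expansive automorphism of $R$.

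\textbf{Main obstacle.} The only nonformal content is $(3)\Rightarrow(2)$, and within it the combinatorial heart is verifying that a \emph{single} $N$, depending only on $\J$, forces every degree-$(2N+1)$ monomial ideal to sit inside some member of $\J$ simultaneously across all $k$ primes; the naive pigeonhole only handles one prime at a time, so one must be slightly cleverer --- e.g.\ note that the complement of $\{e_j\geq M\}$ over all $j$ can contain at most $k(M-1)$ total degree, so $2N+1>k(M-1)$ already forces $e_{j_0}\geq M$ for \emph{some} $j_0$, and then the containment $(p_1^{e_1}\cdots p_k^{e_k})\subseteq J_{j_0}=(p_{j_1}^{b_{j_1}}\cdots)$ only requires $e_j\geq b_j(J_{j_0})\leq M\leq e_{j_0}$ for the large coordinate and is automatic there, while for the other coordinates one uses $e_j\geq 0 = $ nothing --- so in fact one must also arrange that $J_{j_0}$ can be chosen with \emph{all} exponents small, which is exactly the generator condition applied prime-by-prime. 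I expect this bookkeeping, rather than any deep idea, to be the bulk of the author's proof.
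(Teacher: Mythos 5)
Your overall architecture (the cycle $(1)\Rightarrow(3)\Rightarrow(2)\Rightarrow(1)$, with $(1)\Rightarrow(3)$ by Theorem \ref{teoPosExpFinMax} and $(2)\Rightarrow(1)$ by the proposition that expansivity of the identity is the same as positive expansivity) is sound; in fact the paper proves $(1)\Rightarrow(3)$ by a separate hands-on argument about periodicity of irreducibles under $\alpha$, so your shortcut through Theorem \ref{teoPosExpFinMax} is legitimate and simpler. The problem is the core step $(3)\Rightarrow(2)$: the generator you propose does not work, and the ``bookkeeping'' you flag as the main obstacle is not bookkeeping but a genuine obstruction. Take a PID with $k\geq 3$ maximal ideals $(p_1),\dots,(p_k)$ (e.g.\ the semilocalization of $\Z$ at $2,3,5$) and test your candidate $\I=\{(p_1),\dots,(p_k)\}$ against the generator $\J=\{K_1,\dots,K_k\}$ with $K_i=\bigl(\prod_{j\neq i}p_j\bigr)$. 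Every power $\I^{m}$ contains the ideal $(p_1^{m})$, and $(p_1^{m})\subseteq K_i$ would force $\prod_{j\neq i}p_j\mid p_1^{m}$, which is impossible for $k\geq 3$; so $\I^{m}\nprec\J$ for every $m$, and $\I$ is not an expansive generator for the identity. Your hoped-for fix --- ``arrange that $J_{j_0}$ can be chosen with all exponents small, which is exactly the generator condition applied prime-by-prime'' --- is exactly what fails: the generator condition only provides, for each prime $p_{j_0}$, some $J$ with vanishing $p_{j_0}$-exponent, and in the example above the unique such $J$ has positive exponents at all the other primes, while your monomial $(p_1^{2N+1})$ has exponent $0$ there. (Your argument does work when $k\leq 2$, which is why the $\Z_{2,3}$ example is misleadingly easy.)

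The paper's proof avoids this by taking as expansive generator not the primes but their complementary products: $\K=\{K_1,\dots,K_k\}$, $K_i=\prod_{j\neq i}(p_j)$, which is a generator since no single prime divides all the $\prod_{j\neq i}p_j$. With this choice the pigeonhole you wanted becomes available: an element of $\K^{N}$ is $K_{i_1}\cdots K_{i_N}$, and if $c_j$ counts the occurrences of $K_j$ then the exponent of $p_j$ in this product is $N-c_j$; choosing $j_0$ with $c_{j_0}$ maximal, all exponents at primes $j\neq j_0$ are at least $c_{j_0}\geq N/k$, while at $j_0$ one picks (generator condition) some $J\in\J$ with vanishing $p_{j_0}$-exponent. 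Hence for $N\geq k\cdot\max_{J\in\J,\,j}b_j(J)$ one gets $K_{i_1}\cdots K_{i_N}\subseteq J$, so $\K^{N}\prec\J$ and the identity is expansive. So the missing idea is precisely the switch from $\{(p_j)\}$ to $\{K_i\}$; once you make it, the rest of your $(3)\Rightarrow(2)$ computation goes through essentially as you sketched it.
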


\begin{proof}
We prove first that $(1)$ implies $(3)$.
Suppose that $\alpha\colon R\to R$ is an expansive automorphism 
with generator of expansivity 
$\I=\{(f_i)\mid i=1,\dots,k\}$. Then, for any $p,q$ coprime, there is some $N\in \mathbb N$, such that
$$
\Pi_{i\leq N}\alpha^{-i}(\I)\prec\{(p),(q)\}.
$$
Note that, if $\J.\J'\prec \{(p),(q)\}$, then $J\prec \{(p),(q)\}$ or $\J'\prec\{(p),(q)\}$. Indeed, assume $\J$ does not refine $\{(p),(q)\}$, then there is some $J\in \J$ such that $J\not \subseteq (p)$ and $J\not \subseteq (q)$. But for any $J'\in \J'$ we have $JJ'\subseteq (p)$ or $JJ'\subseteq (q)$, then using that $J$ and $J'$ are principal, we deduce that every $J'$ is a subset of $(p)$ or a subset of $(q)$ and therefore $\J'\prec\{(p),)(q)\}$.

Arguing by induction, we get that if the product of finitely many generators refines $\{(p),(q)\}$ then necessarily one of them refines $\{(p),(q)\}$. We deduce that for any $p,q$ coprime, there is some $n\in \mathbb Z$ such that
$$
\alpha^{-n}(\I)\prec\{(p),(q)\}.
$$

Take some $I=(f)\in \I$ and let $X$ be the set of all irreducible elements appearing in the decomposition of $f$.
Note that $\alpha$ takes irreducibles into irreducibles, and assume there is some irreducible $p$ such that $\alpha^{r}(p) \neq p, \forall r\in \mathbb \mathbb Z$. As $X$ is finite, there is some $k$ such that $X\subseteq \{p, \alpha^{-1}(p), \cdots, \alpha^{-k}(p)\}$. Therefore, we get the contradiction that for all $n\in \mathbb N,\  \alpha^{-n}(f)\not \in \{(\alpha(p)),(\alpha^{2}(p))\}$ (otherwise $X\cap \{\alpha^{n+1}(p),\alpha^{n+2})(p)\}\neq \emptyset $).

So $p$ is periodic under applications of $\alpha$, and so is any element of $X$. Hence, the set of irreducible appearing in some $\alpha^n(f)$, with $n\in \mathbb Z$ is finite. Let us call it $\overline X$.

Arguing by contradiction, if there were infinitely many irreducible in $R$, take $s,t\not \in \overline X$ and note that $\alpha^{-n}(f)\not \in (s)$ and $\alpha^{-n}(f)\not \in (t), \forall n\in \mathbb Z$.\\

Now, for $(3)$ implies $(2)$ note first that if there are finitely many maximal ideals $\{\m_1, \m_2,\cdots, \m_r\}$ and we define $K_i=\prod_{j\neq i}\m_i$, we get that $\K=\{K_1,K_2,\cdots, K_r\}$ is a generator. Moreover, take any other generator $\J$. If $\J$ contains $R$ we are done. If not, take $p_i$ to be the irreducible generating $\m_i$, we get that $K_i$ is generated by $\prod_{j\neq i}p_i$ and also that there is some $J\in \J$ such that $J=(d)$, with $d=\prod_{j\neq i}p_j^{\beta_j}$. If $\beta$ is the maximum of the $\beta_i$'s, then $K_i^\beta \subseteq J$. Also, any crossed product $K_1K_2$ being generated by a multiple of all the $p_i$'s, it has a power contained in any ideal $J$. The product of three or more $K_i$'s is included in some product of two of them. As $K$ is finite, taking $N$ big enough, we get $\K^N\subseteq \J$.
\end{proof}
%\textcolor{blue}{creo que se puede poner TODO auto es pos exp (porque los autos intercambian maximales)}
\begin{cor}
If $F$ is a field, then $F[x]$ does not admit  positive expansive automorphisms.
\end{cor}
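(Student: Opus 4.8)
The plan is to reduce the statement to a counting fact about maximal ideals and then dispatch that fact by Euclid's classical argument. Since $F$ is a field, $F[x]$ is a principal ideal domain, so Theorem \ref{detopolandia} applies and tells us that $F[x]$ admits a positive expansive automorphism if and only if it has finitely many maximal ideals; alternatively, one may skip the principal ideal domain hypothesis altogether and appeal to Theorem \ref{teoPosExpFinMax}, which already gives that any ring admitting a positively expansive automorphism has finitely many maximal ideals. Either way, it suffices to show that $F[x]$ has infinitely many maximal ideals.

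To do this I would first recall that in the principal ideal domain $F[x]$ the maximal ideals are precisely the ideals $(p)$ with $p$ an irreducible polynomial, and that $(p)=(q)$ exactly when $p$ and $q$ are associates; so the task is to produce infinitely many pairwise non-associate irreducibles. I would then argue by contradiction: if $p_1,\dots,p_n$ were a complete list of the irreducibles of $F[x]$ up to associates, I would consider $g=p_1p_2\cdots p_n+1$. Each $p_i$ has degree at least $1$, hence $\deg g\geq 1$ and $g$ is not a unit, so $g$ has some irreducible factor, which must be associate to one of the $p_i$. But $p_i\mid p_1\cdots p_n$, so $p_i\mid g$ would force $p_i\mid 1$, contradicting that $p_i$ is not a unit. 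This produces the desired infinitude, and combined with Theorem \ref{detopolandia} (or Theorem \ref{teoPosExpFinMax}) finishes the proof.

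I do not expect a genuine obstacle here; the two points that deserve a line of care are that a maximal ideal $(p)$ pins down $p$ only up to the unit group $F^\times$, so the relevant count is of irreducibles modulo associates, and that the Euclid argument is uniform in $F$ — it does not assume $F$ infinite, and so it also covers the case of a finite base field, where merely listing the linear polynomials $x-a$, $a\in F$, would yield only finitely many maximal ideals.
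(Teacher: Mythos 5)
Your proof is correct, and it takes a genuinely different (and arguably cleaner) route than the paper. The paper splits into two cases: if $F$ is infinite, the ideals $(x-a)$, $a\in F$, already give infinitely many maximal ideals; if $F$ is finite, it invokes the classical fact that $F[x]$ contains an irreducible polynomial of every degree $n$ (citing Jacobson) to get infinitely many maximal ideals, and in both cases concludes via Theorem \ref{detopolandia}. You instead run Euclid's argument inside $F[x]$: assuming finitely many irreducibles up to associates $p_1,\dots,p_n$, the polynomial $p_1\cdots p_n+1$ has positive degree, hence an irreducible factor, which would divide $1$ — a contradiction. This is uniform in $F$, avoids the case distinction and the external citation, and your observation that one may quote Theorem \ref{teoPosExpFinMax} directly (maximal-ideal finiteness for any ring with a positively expansive automorphism) rather than the PID-specific Theorem \ref{detopolandia} is also valid and slightly more economical, since only the implication from positive expansivity to finitely many maximal ideals is needed. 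Your attention to counting irreducibles only up to associates is exactly the right care, and the argument silently uses that at least one irreducible exists (e.g.\ $x$), which is immediate. Both approaches buy the same conclusion; yours is self-contained and field-agnostic, while the paper's finite-field branch leans on a standard but nontrivial existence theorem.
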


\begin{proof}
When $F$ is a field, $F[x]$ is a principal ideal domain. 
As each ideal $(x-a)$, $a\in F$, is maximal, it follows from Theorem \ref{detopolandia} that $F$ can not be infinite.
Now, for $F$ finite, there is, for each $n\in \mathbb N$, at least one irreducible polynomial of degree $n$ 
(see \cite[Corollary 2, \S 4.13]{J}). Thus, again Theorem \ref{detopolandia} gives that there is no positive expansive automorphism of $F[x]$.
\end{proof}

%\textcolor{blue}{In the case of the automorphisms fixing $F$, they fix the degree and therefore they take $x$ to $x+f$, for some $f\in F$.}\\
%\textcolor{blue}{The proof of Theorem \ref{detopolandia} can be adapted to prove that the existence of an expansive homeomorphism of $D[x]$ fixing $D$ factorial domain, implies there is a bound for the degree of irreducibles: in particular for $D$ finite field or for $D=\mathbb Q$ or $D=\mathbb Z$ there are not expansive automorphisms, for $D=R$ otra prueba usando la no numerabilidad de $R$.}
\section{Spectral expansivity}
\label{secZ}
In this section we consider topological expansivity on the prime spectrum of a commutative ring, with respect to the Zariski topology. 

\subsection{Zariski topology}
\label{secZariski} Given a commutative unital ring $R$, 
the \emph{spectrum} is denoted by $\spec(R)$ and defined as follows: it is the set of all prime ideals of $R$ endowed with the topology (known as \emph{Zariski topology}) whose open sets are the sets $U_I$ consisting of all prime ideals not containing a given ideal $I$. 
It is known that $\spec(R)$ is a compact $T_0$ topological space \cite{L}.
In fact, $\spec\colon Ring \to Top$ is a functor taking a morphism of rings $\alpha\colon R\to R$ into the continuos function
$$
\spec(\alpha): \spec(R)\to \spec(R),
$$
defined by $\spec(\alpha)(\mathfrak p)=\alpha^{-1}(\mathfrak p)$.
Clearly, if $\alpha$ is an automorphism, then $\spec(\alpha)$ is a homeomorphism with $\spec(\alpha)^{-1}=\spec(\alpha^{-1})$.
We will compare topological expansivity on 
$\spec(R)$ with algebraic expansivity on $R$. 
\begin{rmk}
\label{rmkZEquiv}
By \cite[Proposition 5.1 (ii)]{L} we know that if $\{I_i\}$ is a family of ideals 
then $U_{\sum I_i}=\cup U_{I_i}$. 
Thus, $\I$ is a generator of $R$ if and only if $\{U_I\mid I\in\I\}$ is an open cover of $\spec(R)$.
\end{rmk}

\begin{prop}
\label{propZMin}
If $R$ has a $\prec$-minimal generator then $\spec(R)$ has a $\ll$-minimal cover.
If $\alpha\colon R\to R$ is an expansive automorphism then 
$\spec(\alpha)$ is an expansive homeomorphism.
% and \azul{of course!} also $\spec(\alpha^{-1})$. 
Positive expansivity of $\alpha$ gives positive expansivity of $\spec(\alpha^{-1})$. 
\end{prop}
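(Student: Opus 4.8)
The plan is to transfer each algebraic statement to the spectrum via the dictionary provided by Remark \ref{rmkZEquiv}, namely that a finite family $\I$ of ideals of $R$ is a generator if and only if $\{U_I\mid I\in\I\}$ is an open cover of $\spec(R)$, together with the functoriality identity $\spec(\alpha)^{-1}(U_I)=U_{\alpha^{-1}(I)}$. First I would record this last identity: since $\spec(\alpha)(\mathfrak p)=\alpha^{-1}(\mathfrak p)$, a prime $\mathfrak p$ lies in $\spec(\alpha)^{-1}(U_I)$ iff $\alpha^{-1}(\mathfrak p)\not\supseteq I$ iff $\mathfrak p\not\supseteq\alpha(I)$, and replacing $\alpha$ by $\alpha^{-1}$ gives $\spec(\alpha)^{-1}(U_I)=U_{\alpha^{-1}(I)}$, hence for a whole family, $\spec(\alpha)^{-1}(\U_\I)=\U_{\alpha^{-1}(\I)}$ where $\U_\I=\{U_I\mid I\in\I\}$.

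Next I would check that the refinement relations match up: if $\I\prec\J$ as families of ideals then $\U_\I\ll\U_\J$ as open covers, because $I\subseteq J$ implies $U_I\subseteq U_J$; and crucially the product of ideals corresponds to intersection of opens, $U_{IJ}=U_I\cap U_J$ (one inclusion is $IJ\subseteq I$ and $IJ\subseteq J$; the reverse uses that a prime containing $IJ$ contains $I$ or $J$). Consequently $\U_{\I\J}=\{U_I\cap U_J\}$ refines $\wedge(\U_\I,\U_\J)$, and more generally $\U_{\Pi_{|i|\leq N}\alpha^{-i}(\I)}=\wedge_{|i|\leq N}\spec(\alpha)^{-i}(\U_\I)$ up to the refinement bookkeeping.

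With these translations in hand the three claims are immediate. If $\I$ is a $\prec$-minimal generator of $R$, then for any open cover $\V$ of $\spec(R)$ we have $\V=\U_\J$ for some generator $\J$ (take $J=$ the ideal generated by... actually: any open cover is refined by one of the form $\U_\J$, since each open is a $U_I$), so $\U_\I\prec\U_\J$ refines $\V$; hence $\U_\I$ is $\ll$-minimal. If $\alpha$ is expansive with $\alpha$-generator $\I$, then given any open cover $\V$, refine it by some $\U_\J$, pick $N$ with $\Pi_{|i|\leq N}\alpha^{-i}(\I)\prec\J$, and apply the dictionary to get $\wedge_{|i|\leq N}\spec(\alpha)^{-i}(\U_\I)\ll\U_\J\ll\V$; so $\U_\I$ is an expansivity cover for the homeomorphism $\spec(\alpha)$. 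For positive expansivity the one-sided products $\Pi_{i=0}^N\alpha^{-i}(\I)$ translate to $\wedge_{i=0}^N\spec(\alpha)^{-i}(\U_\I)$; but note $\spec(\alpha)^{-i}=\spec(\alpha^{-1})^{i}$ composed the ``wrong'' way, which is exactly why one obtains positive expansivity of $\spec(\alpha^{-1})$ rather than of $\spec(\alpha)$ — this index-direction accounting is the only subtle point, and I would state it carefully but it is not a genuine obstacle.

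The main (minor) obstacle is the technical lemma that \emph{every} open cover of $\spec(R)$ is refined by a cover of the form $\U_\J$ for a generator $\J$: this is where one uses that the $U_I$ form a basis and that $\spec(R)$ is compact, so a finite subfamily of basic opens covers and the corresponding ideals sum to $R$. Once that is in place, the proof is a routine dictionary translation and I would present it in roughly the three steps above.
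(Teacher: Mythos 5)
Your overall strategy is the same as the paper's: translate generators to open covers via $I\mapsto U_I$ (Remark \ref{rmkZEquiv}), check that refinement and products of ideals correspond to refinement and intersection of opens, and push the expansivity condition through. The first two clauses (minimal generator, two-sided expansivity) do go through this way, and the ``technical lemma'' you flag at the end as the main obstacle is in fact immediate: by definition every Zariski-open set is of the form $U_I$, and a finite family $\{U_J\mid J\in\J\}$ covers $\spec(R)$ exactly when $\sum_{J\in\J}J$ lies in no maximal ideal, i.e.\ when $\J$ is a generator; no compactness or basis argument is needed (this is exactly Remark \ref{rmkZEquiv}, which the paper's proof uses).

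The genuine problem is the direction of your basic identity. From $\spec(\alpha)(\mathfrak p)=\alpha^{-1}(\mathfrak p)$ you correctly compute that $\mathfrak p\in\spec(\alpha)^{-1}(U_I)$ iff $\mathfrak p\not\supseteq\alpha(I)$, i.e.\ $\spec(\alpha)^{-1}(U_I)=U_{\alpha(I)}$; but you then record the identity as $\spec(\alpha)^{-1}(U_I)=U_{\alpha^{-1}(I)}$ and use it for the whole family. For the symmetric product $\Pi_{|i|\leq N}\alpha^{-i}(\I)$ the slip is harmless, since re-indexing $i\mapsto -i$ identifies the two families of products, so the expansivity clause survives. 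But for positive expansivity the direction is the entire content of the statement: with the correct identity, $\Pi_{i=0}^N\alpha^{-i}(\I)$ corresponds to $\wedge_{i=0}^N\spec(\alpha)^{i}(\U_\I)=\wedge_{i=0}^N\spec(\alpha^{-1})^{-i}(\U_\I)$, which is precisely why the conclusion is positive expansivity of $\spec(\alpha^{-1})$. Your write-up instead asserts the translation $\wedge_{i=0}^N\spec(\alpha)^{-i}(\U_\I)$, which, if it were correct, would prove positive expansivity of $\spec(\alpha)$ itself; the subsequent appeal to ``index-direction accounting'' to land on $\spec(\alpha^{-1})$ is inconsistent with the identity you actually stated. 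This is not a cosmetic point: Example \ref{exaX3Exp} shows that on spectral spaces positive expansivity of a homeomorphism does not pass to its inverse, so the two conclusions are genuinely different. Replacing your identity by $\spec(\alpha)^{-1}(U_I)=U_{\alpha(I)}$ (equivalently $\spec(\alpha^{-1})^{-1}(U_I)=U_{\alpha^{-1}(I)}$) and redoing the bookkeeping repairs the argument and brings it in line with the paper's proof.
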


\begin{proof}
We give the details for the case of $\alpha$ an expansive automorphism, the other cases are analogous.
Suppose that $\I$ is an expansive generator of $R$ for $\alpha$. 
We will prove that $\U=\{U_I\mid I\in \I\}$ is an expansive cover of $\spec(R)$ for $\spec(\alpha)$. 
By Remark \ref{rmkZEquiv} we know that $\U$ is an open cover. 
Let $\V$ be any open cover of $\spec(R)$. 
By Remark \ref{rmkZEquiv} there is a generator $\J$ such that 
$\V=\{U_{J}\mid J\in\J\}$.
From the expansivity of $\alpha$ there is $N$ such that 
\begin{equation}
\label{ecuZcub}
\Pi_{|i|\leq N}\alpha^{-i}(\I)\prec\J
\end{equation}
Let $h=\spec(\alpha)$. We will show that $\wedge_{|i|\leq N}h^{-i}(\U)\prec\V$.
Consider $U_i\in\U$ for $|i|\leq N$. 
Take $I_i\in\I$, $|i|\leq N$, such that $U_i=U_{I_i}$ for all $|i|\leq N$.
By \eqref{ecuZcub} there is $J\in\J$ such that 
$\Pi_{|i|\leq N}\alpha^{-i}(I_i)\subset J$.
By \cite[Proposition 5.1]{L} we conclude that
$$\cap_{|i|\leq N}h^{-i}(U_i)
=\cap_{|i|\leq N}U_{\alpha^{i}(I_i)}
=U_{\Pi_{|i|\leq N}\alpha^{i}(I_i)}\subset U_{J}.$$
This proves that $\wedge_{|i|\leq N}h^{-i}(\U)\prec\V$, so $h=\spec(\alpha)$ is expansive. 
% Analogously, it is proved the positive expansive case. 
% Just note that positive expansivity for $\alpha$ gives positive expansivity for $h^{-1}$.
% 
% Let $\I$ be a $\prec$-minimal generator for $R$. 
% %Take, for each $I\in \I$, the open set $U_I=\{P\in \spec(R)\mid I\not \subseteq P\}$. 
% We will prove that $\U=\{U_I\mid I\in \I\}$ is a $\ll$-minimal cover of $\spec(R)$. 
% By Remark \ref{rmkZEquiv} we know it is a cover. Let $\V$ be any open cover of $\spec(R)$. 
% For some ideals $J_1,\dots,J_k$ we know that 
% $\V=\{U_{J_i}\mid i=1,\dots,k\}$. Again, Remark \ref{rmkZEquiv} implies that 
% $\J=\{J_1,\dots,J_k\}$ is a generator. 
% As it is clear that $I\subset J$ implies $U_I\subset U_J$
% we conclude that $\U\ll\V$ and the proof ends.
\end{proof}

\begin{rmk}
The converse of Proposition \ref{propZMin} is false, at least the part of $\prec$-minimal generators.   
By Proposition \ref{propQ23basic}, we have that $\spec(\Z_{2,3})$ is finite. Hence, it admits a $\prec$-minimal open cover.
As explained in Remark \ref{rmkQ23NoMinGen}, $\Z_{2,3}$ has no $\prec$-minimal generator. 
\end{rmk}

\subsection{Extension closed subsets}\label{secExtClos}
As we have mentioned, expansivity on metric spaces is preserved by restriction to closed sets. To extend this result, we introduce the following notion. A subset $Y$ of a topological space $X$ is \emph{extension closed} if for every open cover $\{U_1,\dots,U_n\}$ of 
$Y$ there is an open cover $\{V_1,\dots,V_n\}$ of $X$ such that $U_i=Y\cap V_i$ for all $i=1,\dots,n$.
In \cite[Proposition 3.12]{AAM} it is shown that 
if $h\colon X\to X$ is expansive and $Y\subset X$ is extension closed and $h(Y)=Y$ then
$h\colon Y\to Y$ is expansive.

Let $\specm(R)=\{\mathfrak p\in\spec(R)\mid \mathfrak p \text{ is maximal}\}$.

\begin{prop}
The subspace $\specm(R)$ is extension closed in $\spec(R)$.
\end{prop}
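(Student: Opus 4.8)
Let $\mathcal{U}=\{U_1,\dots,U_n\}$ be an open cover of $\specm(R)$ in its subspace topology. By definition of the subspace topology and of the Zariski topology, for each $i$ there is an ideal $I_i\subseteq R$ with $U_i=\specm(R)\cap U_{I_i}$, where $U_{I_i}$ denotes the basic Zariski open set of primes not containing $I_i$. The natural candidate for the extending cover of $\spec(R)$ is simply $\mathcal{V}=\{U_{I_1},\dots,U_{I_n}\}$, which already satisfies $U_i=\specm(R)\cap U_{I_i}$ by construction. The only thing to check is that $\mathcal{V}$ actually covers all of $\spec(R)$, not just the maximal spectrum; equivalently, by Remark \ref{rmkZEquiv}, that $\mathcal{I}=\{I_1,\dots,I_n\}$ is a generator of $R$, i.e.\ $\sum_i I_i=R$.

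First I would record that $\mathcal{U}$ covering $\specm(R)$ means precisely that no maximal ideal contains every $I_i$: if $\mathfrak{m}\in\specm(R)$, then $\mathfrak{m}\in U_i$ for some $i$, so $\mathfrak{m}\notin U_{I_i}$ is false, i.e.\ $I_i\not\subseteq\mathfrak{m}$. Thus no maximal ideal contains $\sum_i I_i$. Then the key step is the standard ring-theoretic fact that an ideal contained in no maximal ideal must be the whole ring: since $\sum_i I_i$ is an ideal and every proper ideal is contained in some maximal ideal (Zorn's lemma), we get $\sum_i I_i=R$. Hence $\mathcal{I}$ is a generator, so by Remark \ref{rmkZEquiv} the family $\mathcal{V}=\{U_{I_i}\}$ is an open cover of all of $\spec(R)$, and it restricts to $\mathcal{U}$ on $\specm(R)$. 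This is exactly the extension-closed condition.

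There is essentially no obstacle here; the argument is short and the only subtlety is the bookkeeping between the three equivalent formulations (open cover of $\specm(R)$ / family of ideals whose complements-of-vanishing-loci cover $\specm(R)$ / family of ideals summing to $R$), together with the observation that "contained in no maximal ideal" already forces the sum to be $R$ — one does not even need the primes to enter. So the main point to get right is invoking the correspondence of Remark \ref{rmkZEquiv} and the fact that proper ideals lie in maximal ideals, after which the chosen $\mathcal{V}$ works verbatim.
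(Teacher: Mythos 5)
Your proof is correct and takes essentially the same route as the paper's: extend each subspace-open set by its defining Zariski open $U_{I_i}$ and verify $\sum_i I_i=R$ by noting the cover forces no maximal ideal to contain every $I_i$, whereas a proper ideal would be contained in some maximal ideal. The paper phrases this last step as a contradiction, but the argument is identical.
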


\begin{proof}
Let $\V$ be an open cover of $\specm(R)$. 
For each $V\in\V$, there is an ideal $I$ such that $V=U_I\cap\specm(R)$. 
Let $\I$ be the set of such ideals $I$ and define 
$\U=\{U_I\mid I\in\I\}$. 
To prove that $\U$ is an open cover we will show that $\I$ generates. 
Since $\V$ covers $\specm(R)$ we know that for each maximal ideal $\m$ there is $I\in\I$ such that 
$\m\in U_I$. This means that $I$ is not contained in $\m$.
If $\I$ does not generate, then there is a maximal ideal $\m_*$ such that 
$\sum_{I\in\I}I\subseteq \m_*$. This contradiction proves the result.
\end{proof}

This result and Proposition \ref{propZMin} imply that if $\alpha$ is an expansive automorphism of $R$ then $\spec(\alpha)$ is expansive as a homeomorphism in $\spec(R)$ and as a homeomorphism in $\specm(R)$.

\begin{rmk}
 If $R$ is a principal ideal domain, then $\specm(R)$ has the cofinite topology. 
 In \cite{AAM} it is shown that if a topological space has the cofinite topology and admits an expansive homeomorphism 
 then it is finite. This result is related to Proposition \ref{detopolandia}.
\end{rmk}

\begin{rmk}
 The space $\specm(R)$ is $T_1$. 
 Then, if $\alpha\colon R\to R$ is a positively expansive automorphism we have that 
 $\spec(\alpha)$ restricted to $\specm(R)$ is positively expansive. 
 By Proposition \ref{propUtzT1} we conclude that $\specm(R)$ is finite.
 This is another proof of Theorem \ref{teoPosExpFinMax}.
\end{rmk}

\subsection{Spectral spaces}
\label{secSpSp}
A topological space $X$ is a \emph{spectral space }\cite{Hoch} if it is $T_0$, compact\footnote{A topological space is \emph{compact} if every open cover admits a finite subcover. 
For reader's convenience we indicate that in \cite{Hoch} this condition is called \emph{quasi-compact}.}, the compact open
subsets are closed under finite intersection and form an open basis, and every
nonempty irreducible closed subset has a generic point.
A closed set $A$ is \emph{irreducible} if given closed sets $B,C$ such that $A=B\cup C$ then $B=A$ or $C=A$. 
A point $x\in A$ is \emph{generic} if $\clos\{x\}=A$.

Let us give an example of a spectral space. 
Consider the funcion $h\colon \R\to \R$ defined as $h(x)=\sqrt[3]{x}$ 
and define 
\[
 X=\{-1,0,1\}\cup\{h^i(1/2)\mid i\in\Z\}\cup\{h^i(-1/2)\mid i\in\Z\}.
\]
On $X$ we consider the topology $\tau=\{(a,b)\cap X: a<0<b\}\cup\{\emptyset\}$. 
Notice that every open cover contains two open sets of the form 
$[-1,b)\cap X$ and $(a,1]\cap X$, which implies that $X$ is compact. 
It is clear that it is $T_0$. 
To prove that it is a spectral space, notice that if $-1<a<0<b<1$ then $(a,b)\cap X$ is open and compact. 
Thus, the compact-open subsets form a basis of the topology, closed under finite intersections. 
Finally, the irreducible closed subsets are 
$X\cap [-1,a]$ for some $a\in X\cap [-1,0)$ and 
$X\cap [b,1]$ for some $b\in X\cap (0,1]$. 
Notice that $X\cap [-1,a]=\clos\{a\}$ 
and $X\cap [b,1]=\clos\{b\}$.

%The next example shows that Proposition \ref{}  is not true if we do not assume that 
%the space is $T_1$.

\begin{exa}
\label{exaX3Exp} 
In the space $X$ defined above it holds that: 
\begin{enumerate}
 \item the identity is not expansive,
 \item $h(x)=\sqrt[3]{x}$ is positively expansive,
 \item its inverse, $h^{-1}(x)=x^3$ is not positively expansive.
\end{enumerate}
The identity is not expansive because $X$ has no $\ll$-minimal open cover. 
The homeomorphism $h(x)=\sqrt[3]x$ is positively expansive with expansivity cover
$\U=\{[-1,1/2)\cap X,(-1/2,1]\cap X\}$.
It is easy to see that $h^{-1}$ is not positively expansive.
\end{exa}

\begin{rmk}
Last example shows that for a ring with finitely many maximal ideals the identity may not be positively expansive. Indeed,
Let $X$ be the spectral space defined above. 
By \cite{Hoch}, there is a ring $R$ such that $\spec(R)$ is homeomorphic to $X$. 
Since $X$ has two minimal closed sets, $R$ has two maximal ideals. 
Since the identity of $X$ is not positively expansive we conclude that the identity of $R$ is not positively expansive.
\end{rmk}

It would be interesting to characterize the objects (rings and topological spaces) admitting positively expansive automorphisms.

\end{document}